\documentclass[12pt, a4paper, english, reqno]{amsart}
\usepackage{amssymb} 
\usepackage[dvipsnames]{xcolor}
\usepackage{mathtools}
\usepackage[linktocpage]{hyperref} 
\usepackage{tikz-cd}
\usetikzlibrary{decorations.pathreplacing}

\usepackage[shortlabels]{enumitem}
\usepackage[cal=cm,scr=euler]{mathalfa}

\usepackage[margin=1.20in]{geometry}
\usepackage{microtype}
\usepackage{mlmodern}

\hypersetup{
    colorlinks = true,
    linkbordercolor = {white},
    linkcolor = {NavyBlue},
    anchorcolor = {black},
    citecolor = {NavyBlue},
    filecolor = {cyan},
    menucolor = {red},
    urlcolor = {NavyBlue}
}

\newcommand\QQ{{\mathbb Q}}
\newcommand{\des}{\operatorname{des}}

\newcommand\R{{\mathbb R}}
\newcommand\Z{{\mathbb Z}}
\renewcommand\P{{\mathcal{P}}}
\renewcommand\O{{\mathcal{O}}}
\newcommand{\Ehr}{\operatorname{Ehr}}
\newcommand{\comb}{\operatorname{comb}}
\newcommand{\Pyr}{\operatorname{Pyr}}
\newcommand{\Ch}{\operatorname{Ch}}

\newcommand{\myeuler}[2]{
  \mathchoice
    {\hbox{$\bigg\langle\mkern-9mu\bigg\langle$} \genfrac{}{}{0pt}{0}{#1}{#2} \hbox{$\bigg\rangle\mkern-9mu\bigg\rangle$}}
    {\hbox{$\Big\langle\mkern-7mu\Big\langle$} \genfrac{}{}{0pt}{1}{#1}{#2} \hbox{$\Big\rangle\mkern-7mu\Big\rangle$}}
    {\langle\langle \genfrac{}{}{0pt}{2}{#1}{#2} \rangle\rangle}
    {\langle\langle \genfrac{}{}{0pt}{3}{#1}{#2} \rangle\rangle}
}

\theoremstyle{plain}
\newtheorem{teo}{Theorem}[section]
\newtheorem{theorem}[teo]{Theorem}
\newtheorem{corollary}[teo]{Corollary}
\newtheorem{lemma}[teo]{Lemma}

\newtheorem{proposition}[teo]{Proposition}

\theoremstyle{definition}
\newtheorem{definition}[teo]{Definition}
\newtheorem{example}[teo]{Example}
\newtheorem{remark}[teo]{Remark}

\numberwithin{equation}{section}

\title[An identity for second Eulerian numbers]{An identity for second Eulerian numbers\\via lattice-point counting}

\author{Jack Boncompagni}
\address{Dipartimento di Matematica, Universit\`a di Pisa, Pisa, Italy
}
\email{j.boncompagni@studenti.unipi.it}

\allowdisplaybreaks

\begin{document}

\begin{abstract}
    The second Eulerian numbers are defined via the descent enumerator of Stirling permutations, a class of permutations introduced by Gessel and Stanley. We give a simple and conceptual proof of two identities relating the Bernoulli numbers and the second Eulerian numbers. We rely on a recent Ehrhart-theoretic idea of Ferroni.
\end{abstract}

\maketitle

\section{Introduction}\label{sec:intro}

\noindent The main purpose of this article is to give a simple and conceptual proof of the following result:

\begin{theorem}\label{thm:1.1}
    For every positive integer $n\ge 1$, the following identities hold
    \[2B_{n}=\sum_{j=0}^{n-1}\frac{(-1)^j}{\binom{2n-1}{j}} 
    \myeuler{n}{j},\]
    \[\sum_{j=1}^n\frac{B_j}{j}=
    H_{2n} - n +\frac{1}{2n} \sum_{j=1}^{n-1} \frac{(-1)^{j-1}}{\binom{2n-1}{j-1}}\myeuler{n}{j}.
    \]
\end{theorem}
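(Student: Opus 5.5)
The plan is to follow Ferroni's idea. We realise the second Eulerian numbers as the $h^*$-vector of a degree-$2n$ polynomial with an explicit closed form. Then we compute the derivative of that polynomial at two integer points in two ways: once from the $h^*$-expansion, once from the closed form.

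\textbf{Step 1: the polynomial.} By the Gessel--Stanley generating function,
\[\sum_{k\ge 0} S(n+k,k)\,t^k=\frac{C_n(t)}{(1-t)^{2n+1}}, \qquad C_n(t)=\sum_{j}\myeuler{n}{j}t^j .\]
Hence $p_n(x):=S(n+x,x)$ is a polynomial of degree $2n$, and
\[p_n(x)=\sum_{j=0}^{n-1}\myeuler{n}{j}\binom{x+2n-j}{2n}.\]
This is the Ehrhart-type expansion; in Ferroni's language it is the Ehrhart polynomial of a suitable polytope, but only the identity itself is needed.

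\textbf{Step 2: differentiate the binomial basis.} Write $\binom{x+m}{2n}=\frac{1}{(2n)!}\prod_{i=0}^{2n-1}(x+m-i)$ with $m=2n-j$.
\begin{itemize}
\item If $0\le j\le 2n-1$, the factor vanishing at $x=-1$ is the one with $i=m+1$. Differentiating that factor gives
\[\frac{d}{dx}\binom{x+2n-j}{2n}\Big|_{x=-1}=\frac{(-1)^j}{2n\binom{2n-1}{j}}.\]
\item If $1\le j\le 2n$, the same computation at $x=0$ gives
\[\frac{(-1)^{j-1}}{2n\binom{2n-1}{j-1}}.\]
\item For $j=0$ there is no vanishing factor at $x=0$, and the derivative at $0$ is $H_{2n}$.
\end{itemize}
Consequently the right-hand sides of Theorem~\ref{thm:1.1} are, up to the factor $2n$ and the explicit terms $H_{2n}$ and $n$, exactly $p_n'(-1)$ and $p_n'(0)$.

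\textbf{Step 3: the closed form.} From $\sum_N S(N,k)t^N/N!=(e^t-1)^k/k!$ we get, for all integers $x\ge 0$ and hence as a polynomial identity,
\[p_n(x)=(x+1)(x+2)\cdots(x+n)\,[t^n]\exp\!\Big(x\log\tfrac{e^t-1}{t}\Big).\]
At $x=-1$ the prefactor vanishes. So $p_n'(-1)$ equals the derivative of the prefactor, which is $(n-1)!$, times $[t^n]\frac{t}{e^t-1}=B_n/n!$. This gives $p_n'(-1)=B_n/n$. Comparing with Step 2 yields $\frac{1}{2n}\sum_j(-1)^j\myeuler{n}{j}/\binom{2n-1}{j}=B_n/n$, which is the first identity.

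\textbf{Step 4: the second identity, and the main obstacle.} Here we use $x=0$. Since $p_n(0)=0$, the product rule gives $p_n'(0)=n!\,[t^n]\log\frac{e^t-1}{t}$, and
\[\frac{d}{dt}\log\frac{e^t-1}{t}=\frac{e^t}{e^t-1}-\frac1t\]
has coefficients built from the $B_k$. I expect this step to be the main obstacle. This evaluation produces essentially the single term $B_n/n$, with a boundary correction from $B_1=\pm\tfrac12$. The theorem, however, has the cumulative sum $\sum_{j\le n}B_j/j$ together with $H_{2n}-n$.

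I would try the following to reconcile the two. First, check the exact normalisation, in particular whether the relevant polynomial is $S(n+x,x)$ or a shift such as $S(n+x+1,x+1)$. Second, if the form still does not match, telescope in $n$. Take the difference $p_n'(0)-p_{n-1}'(0)$ computed both ways. From the $h^*$ side, use the standard recurrence for $\myeuler{n}{j}$. From the closed form, use the Bernoulli term. Summing the resulting recursion from $1$ to $n$ should produce $\sum_{j\le n}B_j/j$. Under this telescoping, the $j=0$ contribution $H_{2n}$ and the constants $-n$ should account for the harmonic and linear terms in the statement.
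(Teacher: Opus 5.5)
Your Step~1 is off by one, and the error propagates. The Gessel--Stanley numerator is $\sum_{j=1}^{n}\myeuler{n}{j-1}t^j=t\,C_n(t)$, not $C_n(t)$: the $k=0$ term on the left is $S(n,0)=0$, while $C_n(0)=\myeuler{n}{0}=1$. Hence $\sum_j\myeuler{n}{j}\binom{x+2n-j}{2n}=S(n+x+1,x+1)=p_n(x+1)$, which is the paper's $\Ehr_{P_n}(x)$. For $n=1$ this is $\binom{x+2}{2}$, whereas $p_1(x)=\binom{x+1}{2}$. Your Step~1 also contradicts the fact $p_n(0)=0$ that you use in Step~4.

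So the $x=-1$ evaluation in Step~2 computes $p_n'(0)$, and Step~3 differentiates at the wrong point. The value $p_n'(-1)=(n-1)!\,[t^n]\frac{t}{e^t-1}$ happens to equal $B_n/n$ for $n\ge 2$. For $n=1$, however, it equals $-\frac12$, while the Step~2 side is $\frac12\myeuler{1}{0}=\frac12$.

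The computation that actually gives the first identity is the one in your Step~4. Since $\log\frac{e^t-1}{t}=\sum_{m\ge1}\frac{B_m t^m}{m\cdot m!}$ with $B_1=+\frac12$, you get $p_n'(0)=n!\,[t^n]\log\frac{e^t-1}{t}=\frac{B_n}{n}$. With that repair, your first half is essentially Proposition~\ref{Prop_coeff}. The only difference is that you use the exponential generating function $(e^t-1)^k/k!$ where the paper uses $\prod_{i\le k}(1-it)^{-1}$ and Faulhaber polynomials. (Minor: in Step~2 the factor vanishing at $x=-1$ is $i=m-1$, not $i=m+1$; your resulting formula is nevertheless correct.)

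The second identity is not proved; Step~4 only lists things to try. After the shift, the $x=0$ evaluation in Step~2 is $q_n'(0)$ with $q_n(x)=S(n+x+1,x+1)$. You need to show $q_n'(0)=n+\sum_{j=1}^n B_j/j$.

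The missing idea is where the partial sum comes from. Here $q_n(0)=S(n+1,1)=1$ for every $n$, so the product-rule argument no longer isolates a single Bernoulli number. The paper handles this in Proposition~\ref{prop_coeff_2} by factoring $\sum_n q_n(0)t^n=\frac{1}{1-t}$ out of $\prod_{i=1}^{x+1}(1-it)^{-1}$ before taking logarithms. The factor $\frac{1}{1-t}$ produces the partial sum, and the $j$-th term contributes $[x]\frac{F_j(x+1)}{j}=\frac{B_j+j}{j}$.

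In your framework the shortest repair is the recurrence $S(n+x+1,x+1)=S(n+x,x)+(x+1)S(n+x,x+1)$, i.e.\ $q_n(x)=p_n(x)+(x+1)q_{n-1}(x)$. Differentiating at $x=0$ gives $q_n'(0)=\frac{B_n}{n}+1+q_{n-1}'(0)$. Since $q_0\equiv 1$, this telescopes to $n+\sum_{j=1}^n\frac{B_j}{j}$.

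No recurrence for $\myeuler{n}{j}$ is needed. For each fixed $n$, the $h^*$ side of Step~2 already produces $H_{2n}$ (from the $j=0$ term) plus the stated alternating sum, and the $n$ comes from the $+1$'s.
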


\noindent \looseness=-1 The numbers $B_n$ appearing on the left-hand side are the Bernoulli numbers, whereas $H_n$ are the harmonic numbers and $\myeuler{n}{j}$ denotes a second Eulerian number, i.e., the number of Stirling permutations on $2n$ elements (as defined in \cite{GesselStanley}) having exactly $j$ descents. Fu \cite{Fu2021} proved the first identity, although the proof does not explain how one might come up with other identities of the same type; see also \cite{MO45756}.

In a recent paper by Ferroni \cite{Ferroni2026}, a general technique for producing identities such as those in Theorem~\ref{thm:1.1} is provided. The crucial observation is that several identities of similar form can be deduced from studying the Ehrhart polynomial of a suitable polytope. Ferroni also raised a broad challenge of finding further applicability of these ideas, and the present article illustrates how to use them to prove (and, more importantly, explain) Theorem~\ref{thm:1.1}.

To demonstrate the versatility of this approach, later in the article we introduce a new class of polytopes to which the technique used for Theorem~\ref{thm:1.1} naturally extends.

\section{Preliminaries}

\subsection{Bernoulli numbers}

First, we briefly recall the basic facts about the Bernoulli numbers (see OEIS \cite{oeis}: \href{https://oeis.org/A027641}{A027641} and \href{https://oeis.org/A027642}{A027642}).\footnote{Note that we adopt the convention where $B_1 = 1/2$. Some authors prefer the alternative convention, which differs only in the sign of $B_1$.}

\begin{definition}
    The sequence of \textbf{Bernoulli numbers} $\{B_n\}_{n \ge 0}$ is defined recursively as follows:
    \[
    B_0 = 1 \quad \text{and} \quad B_n = 1 - \frac{1}{n+1} \sum_{j=0}^{n-1} \binom{n+1}{j} B_j \quad \text{for } n \ge 1.
    \]
\end{definition}

\noindent These numbers satisfy numerous properties (see \cite[Section 6.5]{GKP94}), one of which is of central importance to this work. To state it, we first introduce a special class of polynomials known as Faulhaber polynomials.

\begin{definition}
For each integer $n \ge 1$, the \textbf{Faulhaber polynomial} 
$F_{n}(x)\in\QQ[x]$ is the unique polynomial of degree $n+1$ such that
\[
F_{n}(k)=1^{n}+2^{n}+\cdots+k^{n}
\]
for all non-negative integers $k$. For example, $F_{1}(x)=\frac{x(x+1)}{2}$, 
$F_{2}(x)=\frac{x(x+1)(2x+1)}{6}$.
\end{definition}

\noindent
Using the standard notation $[x^j] f(x)$ for the coefficient of $x^j$ in a polynomial or formal power series $f(x)$, the fundamental property is that, for each $n\ge 1$,
\begin{equation}\label{Bernoulli}
    B_{n}= [x]F_{n}(x). 
\end{equation}

\subsection{Second Eulerian numbers}
The second Eulerian numbers (see OEIS \cite{oeis}: \href{https://oeis.org/A340556}{A340556}) count descents in Stirling permutations, more precisely as follows.\footnote{Some authors conventionally treat the final index $i=2n$ as a descent, leading to an off-by-one shift in the notation.}

\begin{definition}[{Gessel--Stanley \cite{GesselStanley}}]
    A \textbf{Stirling permutation} of order $n$ is a permutation of the multiset $\{1, 1, 2, 2, \ldots, n, n\}$ such that for every $i \in \{1, \ldots, n\}$, all elements $s$ appearing between the two occurrences of $i$ satisfy $s > i$. We denote the set of all such permutations by $\mathcal{S}_n^{(2)}$.
\end{definition} 

\begin{definition}
    Let $\sigma \in \mathcal{S}_n^{(2)}$ be a Stirling permutation, written in one-line notation as~$\sigma_1\sigma_2\cdots\sigma_{2n}$. An index $i \in \{1, 2, \ldots, 2n-1\}$ is called a \textbf{descent} of $\sigma$ if $\sigma_i > \sigma_{i+1}$.
    We denote by $\des(\sigma)$ the number of descents of $\sigma$.
\end{definition}

\noindent For example, $\sigma=5513324421 \in \mathcal{S}_5^{(2)}$ has $\des(\sigma)=4$ descents, located at positions $2, 5, 8, \text{ and } 9$.

\begin{definition}
    The \textbf{second Eulerian numbers}, denoted by $\myeuler{n}{k}$, count the number of Stirling permutations of order $n$ with exactly $k$ descents.
\[\myeuler{n}{k}=  \#\{ \sigma \in \mathcal{S}_n^{(2)} \mid \des(\sigma)=k \} .\]
\end{definition}

\begin{remark}
    The second Eulerian numbers are a natural extension of the classical Eulerian numbers, which count the number of permutations of $\{1, \ldots, n\}$ with exactly $k$ descents.
\end{remark}

\noindent We refer to \cite[Section 6.2]{GKP94} for a detailed discussion of these numbers.

\subsection{Stirling numbers of the second kind}
We will see that these numbers are also closely related to Stirling numbers of the second kind (\cite[Section 6.1]{GKP94}), which can be expressed as specializations of a specific symmetric polynomial.
\begin{definition}
    The \textbf{Stirling numbers of the second kind}, denoted by $\left\{ \begin{smallmatrix} n \\ k \end{smallmatrix} \right\}$, count the number of ways to partition a set of $n$ elements into $k$ non-empty subsets.
\end{definition}

\begin{definition}\label{def:2.8}
The \textbf{complete homogeneous symmetric polynomial} of degree $n$, denoted by $h_n(x_1, \ldots , x_k)$, is defined by
\[h_n(x_1, x_2 , \ldots , x_k) \coloneqq \sum_{1 \le i_1 \le i_2 \le \cdots \le i_n \le k}
x_{i_1} x_{i_2} \cdots x_{i_n}\]
\[h_0(x_1, x_2 , \ldots , x_k) \equiv 1.\]
\end{definition}

\noindent Its generating function will also be useful for later propositions.
\begin{lemma}\label{lem:2.8}
    The generating function for the complete homogeneous symmetric polynomials $h_n(x_1, \ldots,x_k)$ is given by
    \[\sum_{j \ge 0} h_j(x_1, \ldots,x_k) t^j  =\prod_{i=1}^k\frac{1}{1-x_it}.\]
\end{lemma}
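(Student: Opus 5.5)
The identity will be proved by expanding each geometric series on the right-hand side and then collecting coefficients, working with formal power series in $t$ with coefficients in $\QQ[x_1,\ldots,x_k]$. For each $i$, the factor $1/(1-x_it)$ is invertible in this ring, since its constant term is $1$, and it equals $\sum_{m\ge 0} x_i^m t^m$. The product of $k$ such series is therefore a well-defined formal power series.

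Multiplying the $k$ series together gives
\[
\prod_{i=1}^k \frac{1}{1-x_it} = \sum_{m_1,\ldots,m_k\ge 0} x_1^{m_1}\cdots x_k^{m_k}\, t^{m_1+\cdots+m_k}.
\]
This expansion is legitimate in the formal setting. For a fixed total degree $j$, only finitely many tuples $(m_1,\ldots,m_k)$ satisfy $m_1+\cdots+m_k=j$, so the coefficient of each $t^j$ is a finite sum.

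The coefficient of $t^j$ is $\sum_{m_1+\cdots+m_k=j} x_1^{m_1}\cdots x_k^{m_k}$, the sum of all monomials of degree $j$ in $x_1,\ldots,x_k$. To match this with Definition~\ref{def:2.8}, I use the bijection between exponent vectors $(m_1,\ldots,m_k)$ with $\sum m_i=j$ and weakly increasing sequences $1\le i_1\le\cdots\le i_j\le k$. In the sequence, each index $i$ is repeated exactly $m_i$ times. Under this bijection the monomial $x_{i_1}\cdots x_{i_j}$ is exactly $x_1^{m_1}\cdots x_k^{m_k}$.

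It follows that the coefficient of $t^j$ is $h_j(x_1,\ldots,x_k)$. For $j=0$ the empty tuple contributes $1$, which agrees with the convention $h_0\equiv 1$. There is no real obstacle in this argument; the only point needing care is stating that the bijection is well defined, which follows because a multiset is determined by its multiplicities.

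As an alternative, one could induct on $k$ using $h_j(x_1,\ldots,x_k)=\sum_{m=0}^{j} x_k^m\, h_{j-m}(x_1,\ldots,x_{k-1})$. This recursion is the coefficient of $t^j$ in the product of the generating function for $k-1$ variables with $1/(1-x_kt)$. The direct bijective argument above is shorter, so I would present that one.
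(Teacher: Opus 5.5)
Your proof is correct and follows the same route the paper indicates: expand each factor $1/(1-x_it)$ as a geometric series in formal power series and identify the coefficient of $t^j$ with $h_j(x_1,\ldots,x_k)$. The paper states this in a single sentence, and your bijection between exponent vectors and weakly increasing index sequences supplies the detail it leaves implicit.
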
 

\noindent This is a standard result, obtained by expanding the product $\prod (1-x_it)^{-1}$ into a formal power series.

These two objects are related by the following lemma.

\begin{lemma}\label{lem:2.10}
For all non-negative integers $n, k$ with $k \ge 1$, the Stirling numbers of the second kind satisfy
\[
\left\{ \begin{matrix} n+k \\ k \end{matrix} \right\} = h_n(1, 2, \ldots, k).
\]
\end{lemma}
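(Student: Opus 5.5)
We prove Lemma~\ref{lem:2.10} by showing that both sides, viewed as functions of $(n,k)$, satisfy the same recurrence with the same boundary values. Then a double induction finishes the argument. Write $S(n,k)$ for $\left\{ \begin{smallmatrix} n+k \\ k \end{smallmatrix} \right\}$ and $h_n^{(k)}$ for $h_n(1,2,\ldots,k)$.

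First, the Stirling side. We use the standard recurrence
\[\left\{ \begin{matrix} m \\ k \end{matrix} \right\} = k\left\{ \begin{matrix} m-1 \\ k \end{matrix} \right\} + \left\{ \begin{matrix} m-1 \\ k-1 \end{matrix} \right\}.\]
It is proved by looking at the element $m$: either $m$ joins one of the $k$ blocks of a partition of $\{1,\ldots,m-1\}$, or $m$ forms a singleton block. Setting $m=n+k$ gives
\[S(n,k)=k\,S(n-1,k)+S(n,k-1)\qquad (n\ge1,\ k\ge 2).\]
The boundary values are $S(0,k)=\left\{ \begin{smallmatrix} k \\ k \end{smallmatrix} \right\}=1$ and $S(n,1)=\left\{ \begin{smallmatrix} n+1 \\ 1 \end{smallmatrix} \right\}=1$.

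Second, the symmetric-function side. By Definition~\ref{def:2.8}, split the monomials of $h_n(x_1,\ldots,x_k)$ according to whether the largest index $i_n$ equals $k$ or not. This gives
\[h_n(x_1,\ldots,x_k)=x_k\,h_{n-1}(x_1,\ldots,x_k)+h_n(x_1,\ldots,x_{k-1}).\]
Alternatively, the same identity follows from Lemma~\ref{lem:2.8}: multiplying the generating function by $(1-x_kt)$ leaves the generating function in $k-1$ variables, and comparing coefficients of $t^n$ gives the identity. Specializing $x_i=i$ yields $h_n^{(k)}=k\,h_{n-1}^{(k)}+h_n^{(k-1)}$. The boundary values are $h_0^{(k)}=1$ and $h_n^{(1)}=1^n=1$.

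Since both families satisfy the same recurrence with matching boundary values, induction on $n+k$ gives $S(n,k)=h_n^{(k)}$ for all $n\ge0$, $k\ge1$. There is no real obstacle here. The only care needed is the indexing: the shift $m=n+k$ must turn the Stirling recurrence into exactly the two-term recurrence in $(n,k)$, and the cases $n=0$ and $k=1$ must be checked separately as base cases.
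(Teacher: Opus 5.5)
Your argument is correct, but it is organized differently from the paper's. The paper proves the lemma in one line by comparing generating functions. Lemma~\ref{lem:2.8} with $x_i=i$ gives $\sum_{n\ge 0} h_n(1,\ldots,k)\,t^n=\prod_{i=1}^k (1-it)^{-1}$, and the paper cites \cite[1.94c]{StanleyEC1} for the fact that $\sum_{n\ge 0}\left\{\begin{smallmatrix} n+k\\ k\end{smallmatrix}\right\}t^n$ is the same product.

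You instead work coefficientwise. Both sides satisfy $a(n,k)=k\,a(n-1,k)+a(n,k-1)$ for $n\ge 1$, $k\ge 2$, with $a(0,k)=a(n,1)=1$, and induction on $n+k$ finishes. Your index shift $m=n+k$ and your base cases are handled correctly.

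The two proofs are the same fact seen from two sides. Multiply your recurrence by $t^n$ and sum over $n$, using $a(0,k)=a(0,k-1)$ for the constant term. This says exactly that the generating function for $k$ equals the one for $k-1$ times $(1-kt)^{-1}$, which is one standard way to obtain the Stirling generating function the paper cites. Likewise, your split according to whether $i_n=k$ is the coefficient form of peeling off the factor $(1-x_kt)^{-1}$ in Lemma~\ref{lem:2.8}.

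Your version is self-contained: everything is proved from the definitions, with no appeal to an external generating-function identity. The paper's version is shorter and fits what follows, since the product $\prod_{i}(1-it)^{-1}$ is exactly the object whose logarithm is expanded in Proposition~\ref{Prop_coeff}.
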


\noindent The identity follows immediately by comparing the respective generating functions (see, for example, \cite[1.94c]{StanleyEC1}).

Before proceeding, we recall the definition of the harmonic numbers (\cite[Section 6.3]{GKP94}), which will appear in the following results.

\begin{definition}
For every integer $n \ge 1$, the $n$-th \textbf{harmonic number} $H_n$ is defined as the sum of the reciprocals of the first $n$ natural numbers:
\[H_n = \sum_{j=1}^n \frac{1}{j}.\]
\end{definition}

\section{Ehrhart theory and order polytopes}

\subsection{Ehrhart theory}

A \textbf{lattice polytope} $\P \subset \R^d$ is the convex hull of finitely many lattice points, i.e., points with integer coordinates. A common problem in discrete geometry is to enumerate the lattice points contained in a given lattice polytope.
\begin{definition}
    Let $\P \subset \R^n$ be a lattice polytope. The \textbf{dimension} of $\P$ is the dimension of its affine span. The \textbf{Ehrhart polynomial} of $\P$ is the function $\Ehr_\P(k)$ that counts the number of lattice points in the polytope $\P$ scaled by a factor $k \in \Z_{> 0}$:
    \[\Ehr_\P(k) \coloneqq \# (k\P \cap \Z^n).\]
\end{definition}

\noindent The following result is due to Eugene Ehrhart, in whose honor the Ehrhart polynomial is named.

\begin{theorem}
    Let $\P \subset \R^n$ be a lattice polytope of dimension $d$. The map $k\mapsto \Ehr_\P(k)$, defined for positive integers $k$, is a polynomial of degree $d$.
\end{theorem}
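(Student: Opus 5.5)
The plan is to reduce to simplices, handle a single simplex by a generating-function computation on its cone, and then reassemble the pieces. First, I would triangulate $\P$ using only its vertices, so that $\P$ becomes a union of lattice simplices meeting face-to-face. Then $\P$ is the disjoint union of the relative interiors $\Delta^\circ$ of all faces $\Delta$ of this triangulation. Since $k\P$ is triangulated by the dilates $k\Delta$, we get
\[
\Ehr_\P(k)=\sum_{\Delta}\#\bigl(k\Delta^\circ\cap\Z^n\bigr),
\]
where the sum runs over all faces $\Delta$ of the triangulation, of every dimension $m\le d$. It therefore suffices to show that, for an $m$-dimensional lattice simplex $\Delta$, the function $k\mapsto\#(k\Delta^\circ\cap\Z^n)$ agrees for $k\ge 1$ with a polynomial of degree exactly $m$ with positive leading coefficient. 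Summing these, the top-dimensional simplices contribute positive leading coefficients in degree $d$ that cannot cancel, and all lower faces only contribute lower-degree terms. Hence $\Ehr_\P$ is a polynomial of degree exactly $d$.

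For a simplex with vertices $v_0,\dots,v_m$, I would use the cone over $\Delta\times\{1\}$ in $\R^{n+1}$, generated by $w_i=(v_i,1)$. Every lattice point in the open cone $\sum_i \lambda_i w_i$ with all $\lambda_i>0$ decomposes uniquely as $p+\sum_i a_iw_i$. Here $a_i\in\Z_{\ge0}$ and $p$ lies in the half-open parallelepiped
\[
\Pi=\Bigl\{\textstyle\sum_i\mu_iw_i : 0<\mu_i\le 1\Bigr\},
\]
which contains finitely many lattice points. Grading by the last coordinate, the lattice points at height $k$ are exactly those of $k\Delta^\circ$. This gives
\[
\sum_{k\ge1}\#(k\Delta^\circ\cap\Z^n)\,t^k=\frac{h^*(t)}{(1-t)^{m+1}},
\]
where $h^*(t)=\sum_{p\in\Pi\cap\Z^{n+1}}t^{\mathrm{height}(p)}$ has nonnegative coefficients, constant term $0$, and degree at most $m+1$. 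I would then expand $(1-t)^{-(m+1)}=\sum_j\binom{j+m}{m}t^j$, in the spirit of Lemma~\ref{lem:2.8} with all $x_i=1$. The coefficient of $t^k$ becomes $\sum_r h^*_r\binom{k-r+m}{m}$. Because $\deg h^*\le m+1$, each binomial term is valid as a polynomial in $k$ for every $k\ge1$, including when $k<r$, where $\binom{k-r+m}{m}$ vanishes as a polynomial for $1\le r-k\le m$.

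The degree-$m$ coefficient of this polynomial is $h^*(1)/m!$. This is positive because $\Pi$ contains the lattice point $\sum_i w_i$, so $h^*(1)\ge1$. Together with the triangulation step, this completes the argument.

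The step I expect to be the most delicate is the bookkeeping at small $k$. One has to ensure that the binomial expressions coincide with the counts for all $k\ge1$ and not merely for large $k$. This is why I would use open simplices: their generating functions have numerator degree at most $m+1$, so the polynomial identity holds from $k=1$ onward. The existence of a lattice triangulation without new vertices is standard, for instance by pulling or placing vertices one at a time, and I would take it as known.
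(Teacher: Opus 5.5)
Your proof is correct and is essentially the standard argument. The paper gives no proof of its own and cites \cite[Theorem 3.8]{ccd}, where Ehrhart's theorem is proved in the same way: a lattice triangulation without new vertices, plus the generating function of a lattice simplex computed from the fundamental parallelepiped of its cone. The only difference is cosmetic: you decompose $\P$ into relatively open simplices and use the parallelepiped $\{0<\mu_i\le 1\}$, rather than closed simplices reassembled by inclusion--exclusion, which makes validity for every $k\ge 1$ and positivity of the degree-$d$ coefficient immediate.
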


\noindent For a deeper exploration of this remarkable result, we refer the reader to \cite[Theorem 3.8]{ccd}.\footnote{
    Note that a different notation is adopted in the cited work; specifically, the Ehrhart polynomial is denoted by $L_\P(t)$, while $\Ehr_\P(z)$ denotes its generating function.}

We now introduce another polynomial associated with a given lattice polytope by considering its Ehrhart generating function. For every polynomial $p(x)$ of degree $d$, it is a standard result in enumerative combinatorics that its generating function is a rational function of the form: 
\[\sum_{j \ge 0} p(j)x^j = \frac{h(x)}{(1-x)^{d+1}},\]
where $h(x)$ is a polynomial of degree at most $d$ (see \cite[Corollary 4.3.1]{StanleyEC1}).
In the specific case $p(x)=\Ehr_\P(x)$, the numerator $h(x)$ is called the \textbf{\boldmath $h^*$-polynomial} of $\P$ and we set $h^*_i \coloneqq [x^i]h^*(x)$. Expanding the definition leads to the identity below.
\begin{lemma}\label{lem:3.3}
{\cite[Lemma 3.14]{ccd}}
    Let $\P \subset \R^n$ be a lattice polytope of dimension $d$. The Ehrhart polynomial of $\mathcal{P}$ is given by
    \[\Ehr_\P(x)=\sum_{j=0}^{d}h_{j}^{*}\binom{x+d-j}{d}.\]
\end{lemma}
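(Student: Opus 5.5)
The statement to prove is Lemma~\ref{lem:3.3}: for a lattice polytope $\P$ of dimension $d$, $\Ehr_\P(x)=\sum_{j=0}^d h_j^*\binom{x+d-j}{d}$. The plan is to compare coefficients of $t^k$ on both sides of the defining identity
\[
\sum_{k\ge 0}\Ehr_\P(k)\,t^k=\frac{h^*(t)}{(1-t)^{d+1}},
\]
where we adopt the standard convention $\Ehr_\P(0)=1$. This is the value of the polynomial at $0$, and it is also the count for $0\cdot\P=\{0\}$. Once the identity holds at all non-negative integers $k$, it holds as a polynomial identity in $x$. Both sides are polynomials in $x$ of degree at most $d$, and they agree at infinitely many points.

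The first step is to expand $(1-t)^{-(d+1)}$ as a power series. Differentiating the geometric series $d$ times, or counting multisets, gives
\[
\frac{1}{(1-t)^{d+1}}=\sum_{m\ge 0}\binom{m+d}{d}t^m.
\]
Multiplying by $h^*(t)=\sum_{j=0}^d h_j^* t^j$ and reading off the coefficient of $t^k$ yields
\[
\Ehr_\P(k)=\sum_{j=0}^{\min(k,d)} h_j^*\binom{k-j+d}{d}.
\]

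The only point requiring care is extending the sum to all $j\le d$ when $k<d$. For $k<j\le d$ we have $0\le k-j+d<d$. The polynomial $\binom{x+d-j}{d}=\frac{(x+d-j)(x+d-j-1)\cdots(x-j+1)}{d!}$ has the integers $x=j-d,\dots,j-1$ among its roots, and $k$ is one of these roots. So the extra terms vanish, and $\Ehr_\P(k)=\sum_{j=0}^d h_j^*\binom{k+d-j}{d}$ for every integer $k\ge0$.

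Finally, since both $\Ehr_\P(x)$ and the right-hand side are polynomials agreeing on all of $\Z_{\ge0}$, they coincide as polynomials. No step is a real obstacle. The only subtleties are the $k=0$ convention and the vanishing of the binomial coefficients for small $k$.
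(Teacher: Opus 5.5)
Your proof is correct and is the same argument the paper relies on when it says the lemma follows by expanding the definition (as in \cite[Lemma 3.14]{ccd}): expand $(1-t)^{-(d+1)}=\sum_{m\ge0}\binom{m+d}{d}t^m$, extract the coefficient of $t^k$, and observe that $\binom{k+d-j}{d}=0$ for $0\le k<j\le d$. One small correction: $\Ehr_\P(0)=1$ is not a convention you adopt but the content of Lemma~\ref{hzero}, and your argument does not need it, since agreement at all positive integers $k$ already forces the polynomial identity.
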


\noindent We also require the following lemma,\footnote{
This is equivalent to the condition $\Ehr_\P(0) = 1$, which may be more or less immediate depending on the definitions used.
} whose proof can be found in \cite[Lemma 3.13]{ccd}.

\begin{lemma} \label{hzero}
    Let $\P \subset \R^n$ be a lattice polytope. Then the $h^*$-polynomial satisfies $h^*_0=1$.
\end{lemma}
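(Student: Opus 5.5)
The plan is to identify $h^*_0$ with the value of the Ehrhart polynomial at $0$, and then to show $\Ehr_\P(0)=1$ by reducing to simplices.

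\textbf{Step 1: $h^*_0=\Ehr_\P(0)$.} By definition, $h^*(x)=(1-x)^{d+1}\sum_{k\ge 0}\Ehr_\P(k)x^k$, where $\Ehr_\P(0)$ means the value of the polynomial at $0$. Comparing constant terms gives $h^*_0=\Ehr_\P(0)$. Setting $x=0$ in Lemma~\ref{lem:3.3} gives the same conclusion, since $\binom{d-j}{d}$ is $1$ for $j=0$ and $0$ for $1\le j\le d$. So it suffices to prove $\Ehr_\P(0)=1$.

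\textbf{Step 2: open lattice simplices.} Let $\Delta$ be a lattice simplex of dimension $e$ with vertices $v_0,\dots,v_e$. Cone over $\Delta\times\{1\}$ in $\R^{n+1}$. Every lattice point of this cone is uniquely a lattice point of the half-open fundamental parallelepiped
\[\Pi=\Bigl\{\textstyle\sum\lambda_i(v_i,1):0\le\lambda_i<1\Bigr\}\]
plus a non-negative integer combination of the generators $(v_i,1)$. This gives
\[\sum_{k\ge 0}\#(k\Delta\cap\Z^n)\,x^k=\frac{\sum_{p\in\Pi\cap\Z^{n+1}}x^{\mathrm{ht}(p)}}{(1-x)^{e+1}},\]
where $\mathrm{ht}$ is the last coordinate. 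The only point of $\Pi$ at height $0$ is the origin. Hence for a simplex the polynomial agrees with the count for all $k\ge 0$, including $k=0$, where the count is $1$.

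For the relative interior $\Delta^\circ$, use the parallelepiped with the inequalities reversed, $0<\lambda_i\le 1$. This shows $k\mapsto\#(k\Delta^\circ\cap\Z^n)$, for $k\ge1$, is a polynomial. Alternatively, inclusion–exclusion over the faces of $\Delta$ (whose Ehrhart polynomials all equal $1$ at $0$) gives its value at $0$:
\[\sum_{G\subseteq\Delta}(-1)^{e-\dim G}\cdot 1=(-1)^e.\]
Here the sum runs over all nonempty faces $G$ of $\Delta$, with $\Delta$ itself included. The right-hand side follows from the binomial theorem.

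\textbf{Step 3: general $\P$.} Take a triangulation $T$ of $\P$ using only the vertices of $\P$, so all its cells are lattice simplices. Then $\P$ is the disjoint union of the relative interiors of the faces of $T$, and the same holds after dilating by $k$. Therefore
\[\Ehr_\P(k)=\sum_{F\in T}\#(kF^\circ\cap\Z^n)\quad\text{for }k\ge1.\]
This is an identity of polynomials, so we may evaluate at $0$ using Step 2:
\[\Ehr_\P(0)=\sum_{F\in T}(-1)^{\dim F}=\chi(\P)=1,\]
because $\P$ is a contractible polytope (a ball).

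\textbf{Main obstacle.} I expect the hardest step to be justifying this last equality without topology. The plan is to invoke the Euler–Poincaré relation for triangulations of a convex polytope. Alternatively, one can avoid it with a half-open (shelling-type) decomposition of $T$, in which every half-open simplex except the first has polynomial vanishing at $0$.
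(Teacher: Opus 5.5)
Your proof is correct, and it does more than the paper does. The paper does not prove this lemma: it cites \cite[Lemma 3.13]{ccd} and notes in a footnote that the statement is equivalent to $\Ehr_\P(0)=1$. Your Step 1 is exactly that equivalence. With the paper's definition, $h^*$ is built from the polynomial values $p(j)$, $j\ge 0$, so $h^*_0=p(0)$ and the lemma really is a statement about the constant term. (In conventions where the Ehrhart series is declared to begin with $1$, $h^*_0=1$ is formal and the same content reappears as $\deg h^*\le d$.) Steps 2--3 then give an actual proof of $\Ehr_\P(0)=1$ by the standard route: parallelepipeds for simplices, a partition of $\P$ into relatively open lattice simplices, and evaluation at $0$. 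This makes visible that the whole content is $\chi(\P)=1$; the citation buys only brevity.

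Two small points. First, in Step 2 the height-$0$ observation alone does not give agreement at $k=0$. You also need that every point of the half-open $\Pi$ has height $\sum\lambda_i<e+1$, i.e.\ at most $e$. Then the numerator has degree $\le e$, and the coefficients are given by one polynomial from $k=0$ on. The open parallelepiped does contain a point of height $e+1$, which is exactly why $\Delta^\circ$ yields $(-1)^e$ rather than $0$.

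Second, to avoid topology in Step 3, prefer the visibility decomposition to a shelling. A shelling needs a shellable triangulation, and the fact that no simplex of a shelling of a ball is glued along its entire boundary is just $\chi=1$ in disguise. Instead, choose $q$ generic in the interior of one maximal simplex $\Delta_1$ of $T$, and delete from each maximal simplex the facets whose affine hulls separate it from $q$. This partitions $\P$ into half-open simplices. $\Delta_1$ stays closed, and every other one loses at least one facet but never all of them, since the barycentric coordinates of $q$ sum to $1$ and cannot all be negative. Each such half-open simplex has parallelepiped heights in $\{1,\dots,d\}$, so its counting polynomial vanishes at $0$. Hence $\Ehr_\P(0)=1$ follows with no lower-dimensional cells to track.
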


\noindent With these results in hand, we are ready to establish a more specific connection between the Ehrhart polynomial and the $h^*$-polynomial.
This relationship, which was the central motivation of \cite{Ferroni2026}, serves as the core of the main proof.

\begin{lemma}\label{Lem_F}
 Let $\P \subset \R^n$ be a lattice polytope of dimension $d > 0$. Then
\[[x]\Ehr_\P(x-1) = (-1)^{d-1}h_d^* H_d + \frac{1}{d} \sum_{j=0}^{d-1} (-1)^j \frac{h_j^*}{\binom{d-1}{j}}.
\tag{1}\]
\[[x]\Ehr_\P(x) = H_d + \frac{1}{d} \sum_{j=1}^{d} (-1)^{j-1} \frac{h_j^*}{\binom{d-1}{j-1}}.\tag{2}\]
\end{lemma}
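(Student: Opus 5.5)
The plan is to substitute the expansion of Lemma~\ref{lem:3.3} and extract the linear coefficient term by term. Replacing $x$ by $x-1$ in Lemma~\ref{lem:3.3} gives
\[
\Ehr_\P(x-1)=\sum_{j=0}^{d}h_j^*\binom{x-1+d-j}{d},
\qquad
\Ehr_\P(x)=\sum_{j=0}^{d}h_j^*\binom{x+d-j}{d}.
\]
Since $[x]$ is linear, both formulas reduce to computing $[x]\binom{x+m}{d}$ for the integers $m\in\{-1,0,\ldots,d\}$ that occur.

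\textbf{Case of a zero root.} Write
\[
\binom{x+m}{d}=\frac{1}{d!}\prod_{i=0}^{d-1}(x+m-i).
\]
When $0\le m\le d-1$, exactly one factor is $x$ itself, namely $i=m$. The coefficient of $x$ is then the product of the remaining factors at $x=0$, divided by $d!$:
\[
[x]\binom{x+m}{d}=\frac{1}{d!}\prod_{i\ne m}(m-i)=\frac{(-1)^{d-1-m}\,m!\,(d-1-m)!}{d!}=\frac{(-1)^{d-1-m}}{d\binom{d-1}{m}}.
\]

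\textbf{Case of no zero root.} In the two remaining cases the polynomial has no zero root, so I take the derivative at $0$ via $P'(0)=P(0)\sum_k 1/(0-r_k)$, where the $r_k$ are the roots.
\begin{itemize}
\item For $m=-1$, the polynomial is $\frac{1}{d!}\prod_{k=1}^{d}(x-k)$. Its value at $0$ is $(-1)^d$, so the coefficient of $x$ is $(-1)^d\cdot(-H_d)=(-1)^{d-1}H_d$.
\item For $m=d$, the polynomial is $\frac{1}{d!}\prod_{k=1}^{d}(x+k)$. Its value at $0$ is $1$, so the coefficient of $x$ is $H_d$.
\end{itemize}

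\textbf{Assembling (1).} The term $j$ corresponds to $m=d-1-j$.
\begin{itemize}
\item For $0\le j\le d-1$ we have $0\le m\le d-1$. This gives $d-1-m=j$ and $\binom{d-1}{m}=\binom{d-1}{j}$, so the contribution is $\frac{(-1)^j}{d\binom{d-1}{j}}h_j^*$.
\item For $j=d$ we have $m=-1$, which contributes $(-1)^{d-1}H_d\,h_d^*$.
\end{itemize}
Summing these gives (1).

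\textbf{Assembling (2).} The term $j$ corresponds to $m=d-j$.
\begin{itemize}
\item For $1\le j\le d$ we have $0\le m\le d-1$. This gives $d-1-m=j-1$, so the contribution is $\frac{(-1)^{j-1}}{d\binom{d-1}{j-1}}h_j^*$.
\item For $j=0$ we have $m=d$, which contributes $H_d\,h_0^*$. This equals $H_d$ by Lemma~\ref{hzero}.
\end{itemize}
Summing these gives (2).

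The only delicate point is the bookkeeping: tracking the signs and the binomial reindexing $m\leftrightarrow j$, and remembering to treat the two boundary terms ($j=d$ in (1) and $j=0$ in (2)) separately, since they produce the harmonic numbers. The rest is a routine evaluation of a product of consecutive integers.
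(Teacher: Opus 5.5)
Your proof is correct and follows essentially the same route as the paper. You expand via Lemma~\ref{lem:3.3}, note that every term except the boundary one ($j=d$ in (1), $j=0$ in (2)) has $x$ as a factor, so its linear coefficient is the product of the remaining factors at $0$ divided by $d!$, and you extract $H_d$ from the boundary term. The differences are only cosmetic: you index uniformly by $m$, and you get the boundary coefficient from the logarithmic derivative $P'(0)=P(0)\sum_k 1/(0-r_k)$ instead of summing the products that omit one factor, which is the same computation.
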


\begin{proof}
We begin with the first identity. By Lemma~\ref{lem:3.3}, we have
\begin{align*}
[x]\Ehr_\P(x-1) &= [x]\sum_{j=0}^d h_j^* \binom{x-1+d-j}{d} \\
&= [x]\frac{h_d^*}{d!}(x-1)(x-2)\cdots(x-d) \ + \\
&\ \quad[x]\sum_{j=0}^{d-1} \frac{h_j^*}{d!} (x+d-1-j)(x+d-2-j) \cdots (x-j).
\end{align*}

\noindent The coefficient of $x$ in the first term is given by
\[ (-1)^{d-1}\frac{h_d^* }{d!} \sum_{j=1}^d \frac{d!}{j} = (-1)^{d-1}h_d^* \sum_{j=1}^d \frac{1}{j} = (-1)^{d-1}h_d^* H_d.\]
 
\noindent For the second term, since the sum ranges from $j=0$ to $d-1$, the factor $x$ is present in every summand. Consequently, the coefficient of $x$ in this second part is given by
\[ \sum_{j=0}^{d-1} \frac{h_j^*}{d!} (d-1-j)! \cdot (-1)^j j!
= \frac{1}{d} \sum_{j=0}^{d-1} (-1)^j \frac{h_j^*}{\binom{d-1}{j}}, \]
which completes the proof of (1).

We argue similarly for the second identity:
\begin{align*}
[x]\Ehr_\P(x) &= [x]\sum_{j=0}^d h_j^* \binom{x+d-j}{d} \\
&= [x]\frac{h_0^*}{d!}(x+d)(x+d-1)\cdots(x+1) \ + \\
&\ \quad[x]\sum_{j=1}^d\frac{h_j^*}{d!} (x+d-j)(x+d-1-j) \cdots (x+1-j).
\end{align*}

\noindent The coefficient of $x$ in the first term is given by $h_0^*H_d$, which simplifies to $H_d$ by Lemma~\ref{hzero}. On the other hand, the coefficient of $x$ in the second term is the same as before, but with $j$ shifted by 1. This proves (2).
\end{proof}

\noindent A restricted version of the first identity is proved in \cite{Ferroni2026}, specifically for polytopes with no interior lattice points. In such cases, $h^*_d=0$, and therefore the first term on the right-hand side vanishes. As the reader will see, our proof ultimately relies on working with a polytope for which $h^*_d=0$, but we nevertheless provide the complete general statement for future reference.

\subsection{Order polytopes}

A remarkable application of Ehrhart theory arises in the study of finite posets. Given a \textbf{finite} poset $(P, <_P)$, the \textbf{order polynomial} $\Omega(P, k)$ counts the number of order-preserving maps from $P$ to $\{1, \ldots, k\}$. As shown by Stanley \cite{StanleyOP}, this combinatorial invariant can be realized geometrically through the \textbf{order polytope} $\O(P) \subset \R^{\lvert P \rvert}$, a full-dimensional polytope defined by the inequalities $0 \le x_i \le x_j \le 1$ for all $i <_P j$. The core connection lies in the identity:
\begin{equation} \label{eq_order}
    \Omega_P(k) = \Ehr_{\O(P)}(k-1).
\end{equation} 

\begin{remark}\label{rem_des}
    From this perspective, the $h^*$-polynomial of $\O(P)$ has a deep combinatorial interpretation: it is the descent enumerator of the linear extensions of $P$ (see \cite[Theorem 3.15.8]{StanleyEC1}).
\end{remark}

It is a classical result that for the order polytope $\O(P)$, associated with every \textbf{non-empty}\footnote{This is a boundary case, as the single point in $\R^0$ is open; the empty poset is used in Section~\ref{sec_4}.}
    finite poset $P$ of cardinality $d$, the Ehrhart polynomial satisfies
    \begin{equation} \label{eq_ehrmeno}
        \Ehr_{\O(P)}(-1)=0.
    \end{equation}
    This property follows (via the Ehrhart-Macdonald reciprocity \cite[Theorem~4.1]{ccd}) from the fact that the relative interior of $\O(P)$ contains no lattice points, since order polytopes have vertices with $0/1$-coordinates.
    Evaluating Lemma~\ref{lem:3.3} at $-1$ yields $h^*_d=0$; thus, the first identity of Lemma~\ref{Lem_F} simplifies, as the term involving the harmonic number vanishes.

\section{The main proof} \label{sec_4}

\noindent We now focus on the order polytope $P_n$, which we call the \textbf{comb polytope}, associated with the \textbf{comb poset} $C_n$, whose Hasse diagram is shown in Figure~\ref{fig:1}. By computing its Ehrhart polynomial and its $h^*$-polynomial, we conclude the proof of the main theorem via Lemma~\ref{Lem_F}.
\[P_n \coloneqq  \O(C_n)=
\{(x_1, \ldots , x_n, y_1, \ldots , y_n) \in [0,1]^{2n} \mid x_1 \leq \cdots \leq x_n , \  y_i \leq x_i \ \forall i\}.
\]

\noindent Although the poset $C_n$ (and its associated order polytope $P_n$) is well-defined for all integers $n \ge 1$, we adopt the convention $C_0 = \varnothing$. Under this convention, $P_0$ is the unique point in $\mathbb{R}^0$, and its Ehrhart polynomial is identically one: $\Ehr_{P_0}(x) \equiv 1$.

\begin{figure}[h]
    \centering
    \begin{tikzpicture}[
        every node/.style={circle, fill=black, inner sep=1.8pt},
        scale=1 
    ]
        \def\slope{0.577}

        \foreach \i in {0, 1, 2} {
            \node (L\i) at (\i, {\i*\slope - 1}) {};
            \node (U\i) at (\i, \i*\slope) {};   
            \draw (L\i) -- (U\i);
            \ifnum \i > 0
                \pgfmathtruncatemacro{\prev}{\i-1}
                \draw (U\prev) -- (U\i);
            \fi
        }

        \draw (U2) -- (2.4, {2.4*\slope});
        \node[draw=none, fill=none, rotate=30] at (3, {3*\slope}) {$\dots$}; 
        \draw (3.6, {3.6*\slope}) -- (4, {4*\slope});
        
        \node[draw=none, fill=none, rotate=30] at (3, {3*\slope - 1}) {$\dots$};

        \foreach \i in {4,5} {
            \node (L\i) at (\i, {\i*\slope - 1}) {};
            \node (U\i) at (\i, \i*\slope) {};
            \draw (L\i) -- (U\i);
            \ifnum \i > 4
                \pgfmathtruncatemacro{\prev}{\i-1}
                \draw (U\prev) -- (U\i);
            \fi
        }
        
        \draw (U4) -- (3.6, {3.6*\slope});

        \draw [
            thick,
            decorate, 
            decoration={brace, amplitude=10pt, raise=15pt}
        ] (0, 0) -- (5, {5*\slope});
        
        \node[draw=none, fill=none] at (1.9, 2.5) {$n$};
    \end{tikzpicture}
    \caption{Comb Poset $C_n$.}
    \label{fig:1}
\end{figure}

\subsection{The Ehrhart polynomial}
We want to compute the Ehrhart polynomial $\Ehr_{P_n}(k)$. By definition, for every non-negative integer $k$, this equals the number of lattice points in the $k$-th dilate of $P_n$. That is, 
\begin{align*}\Ehr_{P_n}(k) 
&= \# (kP_n \cap \ \mathbb{Z}^{2n})\\ 
&=
\# \{(x_1, \ldots , x_n, y_1, \ldots , y_n) \in [0,k]^{2n} \mid x_1 \leq \cdots \leq x_n, \ y_i \leq x_i \ \forall i\}.
\end{align*}

\begin{proposition}\label{prop:4.1}
For all non-negative integers $n, k$ with $k \ge 1$, the Ehrhart polynomial of the comb polytope satisfies
    \[
\Ehr_{P_n}(k-1)= h_n(1, 2, \ldots, k).
\]
\end{proposition}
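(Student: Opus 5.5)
The plan is to count the lattice points of $(k-1)P_n$ directly and match the count term by term with the defining sum of $h_n(1,\ldots,k)$. By the description of $\Ehr_{P_n}$ given just above the statement, $\Ehr_{P_n}(k-1)$ is the number of integer tuples $(x_1,\ldots,x_n,y_1,\ldots,y_n)\in[0,k-1]^{2n}$ with $x_1\le\cdots\le x_n$ and $y_i\le x_i$ for every $i$.

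First we fix the chain $x_1\le\cdots\le x_n$ and count the compatible $y$'s. The constraints on the $y_i$ are independent of one another, and each $y_i$ ranges over $\{0,1,\ldots,x_i\}$, which gives $x_i+1$ choices. Substituting $i_j\coloneqq x_j+1$ turns the weakly increasing sequences in $[0,k-1]$ into exactly the sequences $1\le i_1\le\cdots\le i_n\le k$. Summing over all chains therefore gives
\[
\Ehr_{P_n}(k-1)=\sum_{1\le i_1\le\cdots\le i_n\le k} i_1 i_2\cdots i_n = h_n(1,2,\ldots,k),
\]
which is precisely Definition~\ref{def:2.8} evaluated at $x_j=j$.

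The only remaining point is the boundary case $n=0$. Here the convention $\Ehr_{P_0}\equiv 1$ agrees with $h_0\equiv 1$, so the identity holds trivially. Alternatively, one can recover the same statement through \eqref{eq_order}, counting order-preserving maps $C_n\to\{1,\ldots,k\}$: choosing the spine values and then each tooth value below its spine vertex yields the same product formula.

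There is no real obstacle here. The one step requiring care is the index shift between the lattice points in $[0,k-1]$ and the variables $1,\ldots,k$ of $h_n$, which is where the "$k-1$" in the statement comes from.
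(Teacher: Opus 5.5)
Your proposal is correct and takes essentially the same approach as the paper. Both fix the chain $x_1\le\cdots\le x_n$, count the $x_i+1$ independent choices for each $y_i$, shift indices to reach the defining sum of $h_n(1,\ldots,k)$, and handle $n=0$ by convention; the only difference is that the paper counts points of $kP_n$ to get $h_n(1,\ldots,k+1)$, whereas you work with $(k-1)P_n$ directly.
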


\begin{proof}
    By convention, $\Ehr_{P_0}(k) = h_0(x_1, \ldots, x_k) = 1$; therefore, we assume $n \ge 1$.
    For a non-negative integer $k$, we want to count the lattice points $(x_1, \ldots , x_n, y_1, \ldots , y_n)$ such that $x_1 \leq \cdots \leq x_n$  and $0 \leq y_i \leq x_i \leq k$ for each $i=1 , \ldots , n$.
    We sum over all valid integer sequences $0 \leq x_1 \leq \cdots \leq x_n \leq k$, weighted by the number of possible choices for each $y_j$. Since each $y_j$ can independently take $x_j + 1$ values in $\{0, 1, \ldots, x_j\}$, the total count is
   \[\Ehr_{P_n}(k) = \sum_{0 \leq x_1 \leq \cdots \leq x_n \leq k} (x_1 + 1)(x_2 + 1) \cdots (x_n + 1).\]
    After the change of variables $z_i = x_i + 1$:
    \[\Ehr_{P_n}(k) = \sum_{1 \leq z_1 \leq \cdots \leq z_n \leq k+1} z_1 z_2 \cdots z_n = h_n(1, 2, \ldots, k+1),\]
    where the last equality follows from the specialization $x_i \mapsto i$ in Definition~\ref{def:2.8}.
\end{proof}

\begin{corollary}\label{cor:4.2}
    From Lemma~\ref{lem:2.10}, we have $\Ehr_{P_n}(k-1)= \left\{ \begin{matrix} n+k \\ k \end{matrix} \right\}$.
\end{corollary}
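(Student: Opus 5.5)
This is a direct substitution: Proposition~\ref{prop:4.1} gives $\Ehr_{P_n}(k-1)= h_n(1, 2, \ldots, k)$ for all integers $n\ge 0$ and $k\ge 1$, and Lemma~\ref{lem:2.10} gives $h_n(1,2,\ldots,k) = \left\{ \begin{smallmatrix} n+k \\ k \end{smallmatrix} \right\}$ under the same hypotheses. Chaining the two equalities yields the claim for every $k \ge 1$.

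The only point that needs a check is that the ranges of validity agree. Both statements are asserted for non-negative $n$ and $k\ge 1$, so they apply to the same pairs $(n,k)$. The boundary case $n=0$ is consistent with the convention $\Ehr_{P_0}\equiv 1$, since $\left\{ \begin{smallmatrix} k \\ k \end{smallmatrix} \right\}=1$.

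If an identity of polynomials in $k$ were wanted, rather than equality at positive integers, I would note that the left side is a polynomial in $k$. The Stirling side is then defined as that polynomial's continuation, so agreement at infinitely many integers suffices. The corollary as stated only concerns integers $k\ge 1$, so there is no real obstacle here; the substantive content is already in Proposition~\ref{prop:4.1}.
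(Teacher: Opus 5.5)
Your proposal is correct and is essentially the paper's argument: chain Proposition~\ref{prop:4.1} with Lemma~\ref{lem:2.10}, which hold on the same range $n \ge 0$, $k \ge 1$. Your closing remark about passing to a polynomial identity matches what the paper does later in Section~\ref{sec:4.2}, where it cites the Gessel--Stanley polynomiality of $f_n(k)$ and the Polynomial Identity Theorem.
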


\noindent Although this is an elegant expression for the Ehrhart polynomial of $P_n$, its current form does not immediately reveal the coefficient $[k]\Ehr_{P_n}(k-1)$. However, to extract this coefficient we use standard generatingfunctionology, specifically the logarithmic series expansion.
\begin{remark}\label{rem:4.3}
    The power series expansion for the logarithm is given by \[ \ln(1+x) = \sum_{j \ge 1} (-1)^{j+1} \frac{x^j}{j}.\]
    For every formal power series $f(x)$ with constant term equal to one (i.e., $f(0) = 1$), the composition $\ln(f(x))$ is well-defined. Furthermore, we observe that the coefficient of $x$ remains invariant under this transformation: $[x]f(x) = [x]\ln(f(x))$.
\end{remark}

\begin{proposition}\label{Prop_coeff}
    For each integer $n \ge 1$, the following identity holds
    \[[x]\Ehr_{P_n}(x-1) = \frac{B_n}{n}.
\]
\end{proposition}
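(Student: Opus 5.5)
We need $[x]\Ehr_{P_n}(x-1)$, where by Proposition~\ref{prop:4.1} the polynomial $E_n(x):=\Ehr_{P_n}(x-1)$ agrees with $h_n(1,2,\ldots,x)$ at positive integers $x$. The plan is to package all $n$ at once into a generating function in an auxiliary variable $t$, take the logarithm, and read off the coefficient of $x$ using Remark~\ref{rem:4.3}.

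\textbf{Step 1: product formula and logarithm.} By Lemma~\ref{lem:2.8}, for every positive integer $k$,
\[
\sum_{n\ge 0} E_n(k)\,t^n=\prod_{i=1}^k\frac{1}{1-it},
\qquad\text{so}\qquad
\ln\Big(\sum_{n\ge0}E_n(k)t^n\Big)=\sum_{i=1}^k\sum_{m\ge1}\frac{i^m t^m}{m}=\sum_{m\ge 1}\frac{F_m(k)}{m}\,t^m .
\]
The Faulhaber polynomials appear here naturally. Consider $G(x,t):=\sum_{n\ge0}E_n(x)t^n\in \QQ[x][[t]]$, which has constant term $E_0=1$ in $t$. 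Each $t$-coefficient of $\ln G(x,t)$ is a polynomial in $x$, being a finite polynomial expression in $E_1,\ldots,E_m$. This polynomial agrees with $F_m(x)/m$ at all positive integers, hence identically. Therefore
\[
\ln G(x,t)=\sum_{m\ge1}\frac{F_m(x)}{m}t^m \quad\text{in } \QQ[x][[t]].
\]

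\textbf{Step 2: extract the coefficient of $x$.} Both $E_n(0)=0$ for $n\ge1$ (from \eqref{eq_ehrmeno}) and $F_m(0)=0$ hold, so $G(0,t)=1$. Write $G=1+xA(t)+O(x^2)$ with $A(t)=\sum_{n\ge1}([x]E_n)\,t^n$. Then $\ln G = xA(t)+O(x^2)$, since every power $(G-1)^j$ with $j\ge2$ is $O(x^2)$. This is the $x$-analogue of Remark~\ref{rem:4.3}, applied with the roles of the variables arranged so that $x$ is the small variable. Comparing coefficients of $x\,t^n$ gives
\[
[x]E_n(x)=\frac{[x]F_n(x)}{n}=\frac{B_n}{n}
\]
by \eqref{Bernoulli}.

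\textbf{Main obstacle.} The delicate point is justifying the passage from integer evaluations to the polynomial identity in $\QQ[x][[t]]$. It should be handled coefficientwise in $t$, as in Step 1, since each coefficient involves only finitely many polynomials. The second delicate point is checking $G(0,t)=1$, so that the logarithm is linear in $x$ to first order; this uses $E_n(0)=\Ehr_{P_n}(-1)=0$.
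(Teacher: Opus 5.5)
Your proposal is correct and is essentially the paper's own proof. Both use the product formula $\sum_{n}\Ehr_{P_n}(k-1)t^n=\prod_{i=1}^k(1-it)^{-1}$ and its logarithm $\sum_{m\ge1}F_m(k)t^m/m$, promoted to an identity in $\QQ[x][[t]]$ coefficientwise in $t$, together with $G(0,t)=1$, so that taking $\ln$ does not change the coefficient of $x$. Your Step~2 just spells out Remark~\ref{rem:4.3} via $(G-1)^j=O(x^2)$ for $j\ge2$. Incidentally, $F_m(0)=0$ together with Step~1 already forces $G(0,t)=1$, so \eqref{eq_ehrmeno} is not even needed.
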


\begin{proof}
    To simplify the notation, we set $E_n(x) \coloneqq \Ehr_{P_n}(x-1)$.
    
    Let $E(x,t)$ be the generating function for $E_n(x)$:
    \[E(x,t) \coloneqq \sum_{j \ge 0} E_j(x)t^j.\]
    By Lemma~\ref{lem:2.8} and Proposition~\ref{prop:4.1}, for each integer $k \ge 1$, we have
    \[E(k,t) = \sum_{j \ge 0} h_j(1, 2, \ldots, k)t^j = \prod_{i=1}^k \frac{1}{1-it}.\] 
    We want to compute $[x]E_n(x)=[x][t^n]E(x,t)=[t^n][x]E(x,t)$.
    Since \eqref{eq_ehrmeno} states that $E_n(0)=0$ for all integers $n \ge 1$, we have $E(0,t)= E_0(0)=1$. Therefore, by Remark~\ref{rem:4.3}, it follows that $[x]E(x,t) = [x]\ln(E(x,t))$, yielding
    \[[x]E_n(x)=[t^n][x]\ln(E(x,t)).\]
    We compute $\ln(E(k,t))$ for integers $k \ge 1$:
    \begin{multline*}
    \ln(E(k,t)) = \\\ln\left(\prod_{i=1}^k\frac{1}{1-it}\right)= 
    -\sum_{i=1}^k \ln (1-it)=
    \sum_{i=1}^k \sum_{j \ge 1} \frac{(it)^j}{j}=
    \sum_{j \ge 1} \frac{t^j}{j} \sum_{i=1}^k i^j =
     \sum_{j \ge 1} \frac{t^j}{j} F_j(k).
\end{multline*}
    Consequently, we have shown that $[t^n] \ln(E(k, t)) = \frac{F_n(k)}{n}$ for $n \ge 1$ and for infinitely many integer values of $k$. Since $E(x,0)=E_0(x) \equiv 1$, the coefficient $[t^n] \ln(E(x, t))$ is a finite combination of the polynomials $\{E_i(x)\}_{i \ge 1}$ for each $n \ge 1$, and is therefore itself a polynomial in $x$. Since both $[t^n] \ln(E(x, t))$ and $\frac{F_n(x)}{n}$ are polynomials that coincide on an infinite set of points, the Polynomial Identity Theorem guarantees that the relation holds for all real values of $x$. Therefore, \[ [x][t^n]\ln(E(x, t)) = [x] \frac{F_n(x)}{n} = \frac{B_n}{n},\]
    where the last equality follows from \eqref{Bernoulli}. This completes the proof.
\end{proof}

\begin{proposition}\label{prop_coeff_2}
    For each integer $n \ge 1$, the following identity holds
   \[ [x]\Ehr_{P_n}(x)=n+\sum_{j=1}^n\frac{B_j}{j}.\]
\end{proposition}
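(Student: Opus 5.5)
The plan is to reuse the generating-function argument from the proof of Proposition~\ref{Prop_coeff}, shifting the argument by one. Set $G_n(x) \coloneqq \Ehr_{P_n}(x) = E_n(x+1)$ and consider the generating function
\[G(x,t) \coloneqq \sum_{j\ge 0} G_j(x)t^j = E(x+1,t).\]
By Proposition~\ref{prop:4.1} and Lemma~\ref{lem:2.8}, for every integer $k\ge 0$ we have
\[G(k,t) = \prod_{i=1}^{k+1}\frac{1}{1-it}.\]

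The key difference from the previous proof is that $G(0,t)$ is no longer $1$. Instead, $G(0,t) = \frac{1}{1-t}$, since $\Ehr_{P_n}(0)=1$ for all $n$. So Remark~\ref{rem:4.3} cannot be applied verbatim, and we replace it with a logarithmic-derivative version. Write $G(x,t) = G(0,t)\,\bigl(1 + x\,A(t) + O(x^2)\bigr)$ with $A(t) = [x]G(x,t)/G(0,t)$. Then
\[[x]\ln G(x,t) = A(t) = (1-t)\,[x]G(x,t).\]
Equivalently,
\[[x]G(x,t) = \frac{1}{1-t}\,[x]\ln G(x,t).\]
Here $\ln G(x,t)$ is legitimate as a formal power series in $t$ with polynomial coefficients in $x$, because $G(x,0)=1$.

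Next we compute $\ln G(k,t)$ for integers $k\ge 0$ exactly as before:
\[\ln G(k,t) = \sum_{j\ge1}\frac{t^j}{j}F_j(k+1).\]
For each $n\ge 1$, the coefficient $[t^n]\ln G(x,t)$ is a polynomial in $x$. It agrees with $F_n(x+1)/n$ at infinitely many points, so the two are equal as polynomials. It remains to find $[x]F_n(x+1)$. Since $F_n(x+1) = F_n(x) + (x+1)^n$, we get
\[[x]F_n(x+1) = B_n + n,\]
using \eqref{Bernoulli}. Hence
\[[x][t^j]\ln G(x,t) = \frac{B_j}{j} + 1 \quad\text{for } j\ge 1,\]
and the $t^0$ coefficient is $0$.

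Finally, multiplying by $\frac{1}{1-t}$ takes partial sums. This gives
\[[x]G_n(x) = \sum_{j=1}^n\Bigl(1+\frac{B_j}{j}\Bigr) = n + \sum_{j=1}^n \frac{B_j}{j},\]
which is the claim.

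The main obstacle is the step where $G(0,t)\neq 1$, which breaks the naive use of Remark~\ref{rem:4.3}. The factorization $G = G(0,t)\cdot(\text{series with constant term } 1 \text{ in } x)$ resolves it. As an alternative route, one could instead expand $E_n(x+1)$ via Taylor's formula, but that requires the full polynomial rather than just one coefficient. The log-derivative approach is cleaner, so I would pursue it first.
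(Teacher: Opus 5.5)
Your proposal is correct and follows essentially the same route as the paper: factor out $G(0,t)=\frac{1}{1-t}$ so that $[x]\ln G(x,t)=(1-t)\,[x]G(x,t)$, identify $[t^n]\ln G(x,t)$ with $F_n(x+1)/n$ via $G(k,t)=\prod_{i=1}^{k+1}(1-it)^{-1}$, use $[x]F_n(x+1)=B_n+n$, and take partial sums. The step you call a ``logarithmic-derivative version'' is really just the logarithm of the factorization $G(x,t)=G(0,t)\cdot(1-t)G(x,t)$, exactly as in the paper's proof.
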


\begin{proof}
    Setting $E_n(x) \coloneqq \Ehr_{P_n}(x)$, the proof proceeds as in the previous case. The only difference is that $E_n(0)=1$, which implies \[E(0,t) = \sum_{j \ge 0}t^j=\frac{1}{1-t}.\]
    Therefore, before applying the logarithmic series expansion, we must factor out $E(0,t)$:
    \begin{align*}
     &\ln(E(x,t))=\ln\left(\frac{1}{1-t}\right)+\ln((1-t)E(x,t)),\\
    &a(t) \coloneqq [x]\ln(E(x,t))= 0+[x]\ln((1-t)E(x,t))=[x](1-t)E(x,t),\\
    &[x]E_n(x)=[t^n][x]E(x,t)=[t^n]\frac{1}{1-t}a(t)=\sum_{j=0}^n[t^j]a(t)=\sum_{j=1}^n[t^j]a(t).   
    \end{align*}
     
     \noindent Proceeding as before, we find
     \[E(k,t) = \prod_{i=1}^{k+1} \frac{1}{1-it},\]
     \[ [t^n]a(t) = [x] \frac{F_n(x+1)}{n} = [x]\frac{F_n(x)+(x+1)^n}{n}= \frac{B_n+n}{n} \quad \text{for }n \ge 1.\]

     \noindent It follows that
     \[[x]E_n(x)=\sum_{j = 1}^n[t^j]a(t)=\sum_{j = 1}^n\frac{B_j+j}{j}=n+\sum_{j = 1}^n\frac{B_j}{j}.\qedhere\]
\end{proof}

\subsection{\texorpdfstring{The {\boldmath $h^*$}-polynomial}{The h*-polynomial}}\label{sec:4.2} In their paper,
Gessel and Stanley \cite{GesselStanley} also investigated the properties of the so-called \textbf{Stirling polynomials}, which are defined for each non-negative integer $n$ by
\[f_n(k) \coloneqq \left\{ \begin{matrix} k+n \\ k \end{matrix} \right\}.\]
They showed that $f_n(k)$ is indeed a polynomial in $k$; consequently, by the Polynomial Identity Theorem, the equality in Corollary~\ref{cor:4.2} holds for all $n \in \Z_{\ge 0}$ and $k \in \mathbb{C}$. 
Moreover, they established the following result, which can be reinterpreted as the calculation of the $h^*$-polynomial of the comb poset:

\begin{proposition}\label{prop_h}
{\cite[Theorem 2.1]{GesselStanley}}
For every integer $n \ge 1$, the generating function for the Stirling polynomials $f_n(x)$ is given by
\[\sum_{j \ge 0} f_n(j) t^{j} = \frac{\sum_{j=1}^{n} \myeuler{n}{j-1} t^j}{(1-t)^{2n+1}}.  
\]
\end{proposition}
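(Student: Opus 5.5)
The plan is to prove the identity by induction on $n$. I will show that both sides satisfy the same first-order recurrence in $n$ and agree at $n=1$. Write
\[F_n(t)\coloneqq \sum_{j\ge 0} f_n(j)t^j, \qquad D_n(t)\coloneqq \sum_{k\ge 0}\myeuler{n}{k}t^k,\]
so the claim is $F_n(t)=tD_n(t)/(1-t)^{2n+1}$.

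\textbf{Base case.} For $n=1$ we have $f_1(k)=\left\{ \begin{smallmatrix} k+1 \\ k \end{smallmatrix} \right\}=\binom{k+1}{2}$, hence $F_1(t)=t/(1-t)^3$. On the other side, $\mathcal{S}_1^{(2)}=\{11\}$ has no descents, so $D_1(t)=1$, and the two sides agree.

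\textbf{Recurrence for $F_n$.} I use the standard recurrence for Stirling numbers of the second kind,
\[\left\{ \begin{smallmatrix} m \\ k \end{smallmatrix} \right\}=k\left\{ \begin{smallmatrix} m-1 \\ k \end{smallmatrix} \right\}+\left\{ \begin{smallmatrix} m-1 \\ k-1 \end{smallmatrix} \right\}.\]
With $m=n+k$ this reads $f_n(k)-f_n(k-1)=k\,f_{n-1}(k)$ for $k\ge1$. Also $f_n(0)=0$ for $n\ge1$, which is consistent with $\Ehr_{P_n}(-1)=0$ from \eqref{eq_ehrmeno} via Corollary~\ref{cor:4.2}. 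Multiplying by $t^k$ and summing over $k$ gives the functional equation
\[(1-t)F_n(t)=t\,F_{n-1}'(t).\]
Now substitute the inductive hypothesis $F_{n-1}=tD_{n-1}/(1-t)^{2n-1}$ and differentiate. A short computation gives $F_n(t)=tN(t)/(1-t)^{2n+1}$ with
\[N(t)=\bigl(1+(2n-2)t\bigr)D_{n-1}(t)+t(1-t)D_{n-1}'(t).\]
So it remains to show that $D_n$ satisfies exactly this recurrence.

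\textbf{Recurrence for $D_n$.} This is the combinatorial heart of the argument and the step I expect to need the most care. Every $\sigma\in\mathcal{S}_n^{(2)}$ arises uniquely by inserting the adjacent block $nn$ into one of the $2n-1$ gaps of some $\tau\in\mathcal{S}_{n-1}^{(2)}$. This is forced because nothing can lie between the two copies of $n$, since $n$ is maximal. Suppose $\des(\tau)=k$, and recall that the final position is not counted as a descent.
\begin{itemize}[leftmargin=*]
\item Inserting $nn$ inside a descent $\tau_i>\tau_{i+1}$ replaces it by the ascent $\tau_i<n$ followed by the descent $n>\tau_{i+1}$. The number of descents is unchanged.
\item Inserting $nn$ at the very end creates no new descent.
\item Inserting $nn$ at the beginning, or inside an ascent or a plateau, creates exactly one new descent.
\end{itemize}
Hence exactly $k+1$ gaps preserve the descent number and the remaining $2n-2-k$ gaps raise it by one. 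This yields
\[\myeuler{n}{k}=(k+1)\myeuler{n-1}{k}+(2n-1-k)\myeuler{n-1}{k-1}.\]
The only subtle points in this case analysis are plateaus $\tau_i=\tau_{i+1}$ and the asymmetric treatment of the two ends of the word.

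\textbf{Conclusion.} In generating-function form, the recurrence for $\myeuler{n}{k}$ reads
\[D_n=(tD_{n-1})'+(2n-2)tD_{n-1}-t^2D_{n-1}'=\bigl(1+(2n-2)t\bigr)D_{n-1}+t(1-t)D_{n-1}'.\]
This is precisely $N(t)$, so $F_n(t)=tD_n(t)/(1-t)^{2n+1}$ and the induction closes.

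Geometrically, this shows that the $h^*$-polynomial of $P_n$ is $D_n(t)$. The extra factor $t$ comes from the shift $f_n(j)=\Ehr_{P_n}(j-1)$ together with $\Ehr_{P_n}(-1)=0$. An alternative route would combine Remark~\ref{rem_des} with a descent-preserving bijection between linear extensions of $C_n$ and Stirling permutations. I avoid it because it requires a careful choice of natural labeling.
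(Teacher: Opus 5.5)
Your proof is correct, and it takes a genuinely different route from the paper.

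The paper does not prove Proposition~\ref{prop_h} itself; it quotes it from Gessel--Stanley. Later, in the remark identifying $C_n=\comb_n(\Ch_1)$, it sketches a structural alternative:
\begin{itemize}
\item by Remark~\ref{rem_des}, the $h^*$-polynomial of $P_n=\O(C_n)$ is the descent enumerator of the linear extensions of $C_n$;
\item these linear extensions correspond descent-preservingly to $\mathcal{S}^{(2)}_{n,2}$;
\item Park's descent-preserving bijection with $\mathcal{S}^{(2)}_n$ turns this enumerator into $\sum_k \myeuler{n}{k}t^k$;
\item Corollary~\ref{cor:4.2} together with \eqref{eq_ehrmeno} then supplies the extra factor of $t$.
\end{itemize}

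You instead match recurrences on the two sides. The triangular recurrence for Stirling numbers of the second kind becomes $(1-t)F_n=tF_{n-1}'$ (your $F_n$, not the Faulhaber polynomial). Inserting the block $nn$ gives $\myeuler{n}{k}=(k+1)\myeuler{n-1}{k}+(2n-1-k)\myeuler{n-1}{k-1}$, whose generating-function form is exactly your $N(t)$. The derivative computation checks out. So does the gap count: the $k$ descent gaps and the final gap preserve $\des$, and the remaining $2n-2-k$ gaps raise it by one.

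Your argument is elementary and self-contained, with no Ehrhart theory, no natural labeling and no bijection. That makes it the better choice if one only wants this identity.

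The paper's route buys uniformity. For every bicomb $\comb_n(P,R)$, and for Park's posets $P^{(r)}_k$, the $h^*$-polynomial is automatically a descent enumerator of linear extensions, so one only has to identify those linear extensions. Your method would instead need, case by case, a combinatorial recurrence on the permutation side matching the one induced by Corollary~\ref{cor_ehrcomb}.

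One small point to state explicitly: the numerator in the proposition stops at $j=n$, so you need $\myeuler{n}{k}=0$ for $k\ge n$. This follows at once from your recurrence by induction; equivalently, $\deg N\le \deg D_{n-1}+1$ and $D_1=1$.
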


\noindent Thanks to Corollary~\ref{cor:4.2} and the condition \eqref{eq_ehrmeno} (or $f_n(0)=0)$, we have
\[ 
\sum_{j \ge 0} \Ehr_{P_n}(j) t^{j} = \frac{\sum_{j=0}^{n-1} \myeuler{n}{j} t^{j}}{(1-t)^{2n+1}}. 
\]
That is, for $n \ge 1$, the $h^*$-polynomial of the comb polytope $P_n$ is given by $h^*_j = \myeuler{n}{j}$. In particular, $h_j^*=0$ for $ n \le j \le 2n$.

\subsection{Conclusion of the proof}
\begin{proof}
For $n \ge 1$, Propositions~\ref{Prop_coeff} and~\ref{prop_h} yield  $[x]\Ehr_{P_n}(x-1) = \textstyle \frac{B_n}{n}$ and $h^*_j = \myeuler{n}{j}$, respectively. Thus, by Lemma~\ref{Lem_F} (1), we obtain the first identity of the theorem:
\[\frac{B_n}{n} = \frac{1}{2n} \sum_{j=0}^{n-1} (-1)^j \frac{\myeuler{n}{j}}{\binom{2n-1}{j}}.\]

\noindent  For the second identity, Proposition~\ref{prop_coeff_2} yields $[x]\Ehr_{P_n}(x)=n+\sum_{j=1}^n\frac{B_j}{j}$. Thus, by Lemma~\ref{Lem_F} (2), we have
\[n+\sum_{j=1}^n\frac{B_j}{j}=H_{2n} + \frac{1}{2n} \sum_{j=1}^{n-1} (-1)^{j-1} \frac{\myeuler{n}{j}}{\binom{2n-1}{j-1}}.\]
\end{proof}

\begin{remark}
    An analogous proof holds if we consider the open comb polytope $\mathring{P}_n$ instead of $P_n$. In particular, note that \begin{equation}
        \Ehr_{\mathring{P}_n}(x+1) = \left[ \genfrac{}{}{0pt}{}{x}{x-n} \right],
    \end{equation} where the square brackets denote the (unsigned) Stirling numbers of the first kind (see \cite[Section 6.1]{GKP94}).
\end{remark}

\section{Towards generalizations}

\noindent In this section, we discuss how the method used to study the Ehrhart polynomial of the comb polytope is more general than it might seem. We now define a class of posets for which the same approach is applicable.

\begin{definition}
    
Let $P$ be a finite poset. We denote by $\hat{P}$ the poset obtained by adjoining a maximum element $\hat{1}$ to $P$. 
For a non-negative integer $n$, the \textbf{comb over $P$}, denoted by $\comb_n(P)$, is the poset consisting of $n$ disjoint copies of $\hat{P}$, where the partial order is determined by the relations in each $\hat{P}_i$ together with the chain of maximum elements:
$\hat{1}_1 < \hat{1}_2 < \cdots < \hat{1}_n.$
\end{definition}

\noindent We now give a more general definition.

\begin{definition}
     Given two finite posets $P$ and $R$, let $S$ be the poset obtained by adjoining an element $c$ which is a maximum for $P$ and a minimum for $R$. We define the \textbf{bicomb over $P$ and $R$} as the poset $\comb_n(P,R)$ consisting of $n$ disjoint copies of $S$ in which the connecting elements $\{c_1, \ldots, c_n\}$ form a chain: $c_1 < c_2 < \cdots < c_n$.
\end{definition}

\begin{remark}
Observe that $\comb_n(P, \varnothing) = \comb_n(P)$, recovering the standard comb. Note also that $\comb_1(P)$ simply adds a maximum element to $P$; as an order polytope, this is the pyramid over $\O(P)$ (see Section~\ref{sec_pyr}).
\end{remark}

\noindent We call the posets arising from this construction \textbf{comb posets}, and their associated order polytopes \textbf{comb polytopes}. See Figure~\ref{poset_1} and the following figures for some simple examples.

For comb posets, we can compute the Ehrhart polynomials by following the strategy employed in Proposition~\ref{prop:4.1}, simply by adjusting the specialization of the polynomial $h_n$.
Indeed, by summing over all valid integer sequences $0 \leq c_1 \leq \cdots \leq c_n \leq k$ for the connecting chain, weighted by the number of possible choices for the remaining coordinates, we obtain the following result.

\begin{proposition}
\phantomsection
\label{b}
Let $n$ and $k$ be non-negative integers, let $P$ and $R$ be finite posets, and let $Q_n \coloneqq \comb_n(P,R)$ be the bicomb over $P$ and $R$. The Ehrhart polynomial $\Ehr_{\O(Q_n)}$ evaluated at $k$ coincides with the polynomial $h_n(x_0, \ldots, x_k)$ under the specialization
\[x_i = \Ehr_{\O(P)}(i) \cdot \Ehr_{\O(R)}(k-i) \]
for each $i \in \{0, \ldots, k\}$.
\end{proposition}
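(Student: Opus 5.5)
Following the proof of Proposition~\ref{prop:4.1}, I would count the lattice points of $k\,\O(Q_n)$ directly. Such a point is an integer labelling of the elements of $Q_n$ with values in $\{0,\dots,k\}$ that is weakly order-preserving. I would split the coordinates into three groups: the connecting chain $c_1,\dots,c_n$, the coordinates of the $n$ copies $P_1,\dots,P_n$ of $P$, and those of the copies $R_1,\dots,R_n$ of $R$. The count is then obtained by summing over admissible values of the chain, $0\le c_1\le\cdots\le c_n\le k$, and multiplying by the number of ways to fill in everything else.

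The key step is to describe the fibre over a fixed chain value. The relations of $Q_n$ are:
\begin{itemize}
\item the internal relations within each copy of $P$ and each copy of $R$;
\item the relations $p<c_i$ for $p\in P_i$, and $c_i<r$ for $r\in R_i$;
\item the chain relations $c_1<\cdots<c_n$;
\item those implied by transitivity.
\end{itemize}
There are no relations between distinct copies except through the $c_i$. Once $c_i=a$ is fixed, the labellings of $P_i$ are therefore order-preserving maps $P\to\{0,\dots,a\}$. These are exactly the lattice points of $a\,\O(P)$, so there are $\Ehr_{\O(P)}(a)$ of them. Likewise, the labellings of $R_i$ are order-preserving maps $R\to\{a,\dots,k\}$. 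After translating by $-a$, these are the lattice points of $(k-a)\,\O(R)$, giving $\Ehr_{\O(R)}(k-a)$ of them.

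Since the different copies are independent given the chain, the fibre has size $\prod_{i=1}^n x_{c_i}$ with $x_a=\Ehr_{\O(P)}(a)\Ehr_{\O(R)}(k-a)$. Summing over $0\le c_1\le\cdots\le c_n\le k$ gives exactly $h_n(x_0,\dots,x_k)$ by Definition~\ref{def:2.8}, after reindexing the variables by $0,\dots,k$.

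A few details need checking, and the main one is the transitivity claim: I must verify that a labelling satisfying the generating relations (internal ones, $p\le c_i\le r$, and the chain) automatically satisfies every relation of $Q_n$. This holds because order-preservation is preserved under composition of cover relations.

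The remaining point is boundary conventions. If $P$ or $R$ is empty, the corresponding Ehrhart factor must be $1$. This matches the convention $\Ehr_{\O(\varnothing)}\equiv1$ from Section~\ref{sec_4}. If $n=0$, both sides equal $1$. As a sanity check, the case $P=\{\text{pt}\}$, $R=\varnothing$ gives $x_i=i+1$, recovering Proposition~\ref{prop:4.1}.
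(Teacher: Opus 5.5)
Your proposal is correct and is the same argument the paper gives. You sum over the chain values $0\le c_1\le\cdots\le c_n\le k$ and weight each by the number of independent fillings of the copies of $P$ and $R$, namely $\prod_{i}\Ehr_{\O(P)}(c_i)\,\Ehr_{\O(R)}(k-c_i)$. Your added checks (transitivity, empty $P$ or $R$, and $n=0$) fill in what the paper leaves implicit, and the only further tacit point is that at $c_i\in\{0,k\}$ you use $\Ehr(0)=1$, which is equivalent to Lemma~\ref{hzero}.
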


\begin{corollary}\label{cor_ehrcomb}
Let $P$ be a finite poset. For all non-negative integers $n$ and $k$, we have
   \[\Ehr_{\O(\comb_n(P))}(k)=h_n(\Ehr_{\O(P)}(0),\ldots,\Ehr_{\O(P)}(k)).\]
\end{corollary}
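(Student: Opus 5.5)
The corollary follows from Proposition~\ref{b} with $R=\varnothing$, using $\comb_n(P,\varnothing)=\comb_n(P)$ as noted in the preceding remark. Since $\O(\varnothing)$ is the point in $\R^0$, we have $\Ehr_{\O(\varnothing)}\equiv 1$. The specialization in Proposition~\ref{b} then becomes $x_i=\Ehr_{\O(P)}(i)\cdot 1$ for $i\in\{0,\ldots,k\}$, which gives the statement.

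Because Proposition~\ref{b} is only sketched in the text, I would also check the corollary directly by repeating the argument of Proposition~\ref{prop:4.1}. A lattice point of $k\,\O(\comb_n(P))$ is an order-preserving map $f\colon \comb_n(P)\to\{0,\ldots,k\}$, since for order polytopes the lattice points of the $k$-th dilate are exactly such maps. First I would fix the values $c_i=f(\hat{1}_i)$ on the chain of maxima, subject only to $0\le c_1\le\cdots\le c_n\le k$. The remaining constraint is that on each copy $P_i$ the map $f$ is order-preserving with values at most $c_i$, because $\hat 1_i$ is the maximum of $\hat P_i$.

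The key observation is that there are no relations between different copies $P_i$ and $P_j$, nor between $P_i$ and $\hat 1_j$ for $j\neq i$. The choices on different copies are therefore independent. The number of choices on $P_i$ is the number of order-preserving maps $P\to\{0,\ldots,c_i\}$, which is $\Ehr_{\O(P)}(c_i)$. Summing over chains gives
\[
\Ehr_{\O(\comb_n(P))}(k)=\sum_{0\le c_1\le\cdots\le c_n\le k}\prod_{i=1}^n \Ehr_{\O(P)}(c_i),
\]
and reindexing $c_i\mapsto c_i+1$ identifies this sum with $h_n$ evaluated at $x_{j}=\Ehr_{\O(P)}(j-1)$, that is, $h_n(\Ehr_{\O(P)}(0),\ldots,\Ehr_{\O(P)}(k))$ by Definition~\ref{def:2.8}.

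The only delicate points are the boundary cases. For $n=0$ both sides equal $1$, using $C_0=\varnothing$ and $h_0\equiv1$. For $P=\varnothing$ we have $\Ehr_{\O(\varnothing)}(c)=1$, which recovers the chain count; for the comb poset $C_n$, where $P$ is a single point, we get $\Ehr_{\O(P)}(c)=c+1$, recovering Proposition~\ref{prop:4.1}. I expect no step to be a real obstacle; the main care needed is the indexing shift from $\{0,\ldots,k\}$ to the variables $x_1,\ldots,x_{k+1}$ of $h_n$.
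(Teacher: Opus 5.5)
Your proposal is correct and follows the paper's route: the corollary is Proposition~\ref{b} with $R=\varnothing$ (so $\Ehr_{\O(\varnothing)}\equiv 1$), and your direct check is the same chain-weighted count sketched before that proposition. One precision: by transitivity, elements of $P_i$ \emph{are} below $\hat 1_j$ for $j>i$, but those constraints follow from $f\le c_i\le c_j$, so your count is unaffected.
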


\begin{remark}
If we want to study open comb polytopes, an analogous statement holds involving the elementary symmetric polynomial $e_n$ instead of $h_n$.
\end{remark}

\begin{example}\label{es}
    Let $P = \{x, y\}$ be an antichain (i.e., a poset without relations) and let $Q_n\coloneqq\comb_n(P)$ be the comb over $P$ (see Figure~\ref{poset_3}). The Ehrhart polynomial of $\O(P)$ is $\Ehr_{\O(P)}(x) = (x+1)^2$; therefore $\Ehr_{\O(Q_n)}(k) = h_n(1,4,\ldots,(k+1)^2)$
    for every non-negative integer $k$.
\end{example}

\noindent Furthermore, the strategy used in Proposition~\ref{Prop_coeff} can be applied to extract the linear coefficient of the Ehrhart polynomial of a comb polytope.

\begin{proposition}
\label{prop_coeff_gen}
Let $P$ be a finite poset, $n \ge 1$ an integer, and $Q_n = \comb_n(P)$. For each $i \ge 1$, let $S_i(x)$ be the unique polynomial such that
\[ S_i(k) = \sum_{j=0}^{k} (\Ehr_{\O(P)}(j))^i \]
for all non-negative integers $k$. Then
    \[[x] \Ehr_{\O(Q_n)}(x-1) = \frac{1}{n} [x] S_n(x-1),\]
   \[[x] \Ehr_{\O(Q_n)}(x) = \sum_{i=1}^n  \frac{[x]S_i(x)}{i}.\]
\end{proposition}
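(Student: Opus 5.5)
The plan is to imitate the proofs of Propositions~\ref{Prop_coeff} and~\ref{prop_coeff_2}, replacing the specialization $x_i\mapsto i$ by $x_i\mapsto \Ehr_{\O(P)}(i)$ via Corollary~\ref{cor_ehrcomb}. Write $e(j)\coloneqq\Ehr_{\O(P)}(j)$. Set $E_n(x)\coloneqq\Ehr_{\O(Q_n)}(x-1)$ and $E(x,t)\coloneqq\sum_{n\ge0}E_n(x)t^n$, with the convention $\comb_0(P)=\varnothing$ so that $E_0\equiv1$.

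By Corollary~\ref{cor_ehrcomb} and Lemma~\ref{lem:2.8}, for every integer $k\ge1$ we get
\[E(k,t)=\prod_{j=0}^{k-1}\frac{1}{1-e(j)t}.\]
Taking logarithms gives
\[\ln E(k,t)=\sum_{i\ge1}\frac{t^i}{i}\sum_{j=0}^{k-1}e(j)^i=\sum_{i\ge1}\frac{t^i}{i}S_i(k-1).\]
The coefficient $[t^n]\ln E(x,t)$ is a polynomial in $x$, being a finite combination of products of the $E_i$ with $i\ge1$. It agrees with $S_n(x-1)/n$ for infinitely many $k$, so they coincide as polynomials.

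Since $Q_n$ is a nonempty poset for $n\ge1$, \eqref{eq_ehrmeno} gives $E_n(0)=0$. Hence $E(0,t)=1$, and Remark~\ref{rem:4.3} yields $[x]E(x,t)=[x]\ln E(x,t)$. Extracting $[t^n]$ proves the first formula. A small point to check is the existence of $S_i$ as a polynomial: this is clear because $e(j)^i$ is a polynomial in $j$, and partial sums of polynomials are polynomials.

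For the second formula, set $E_n(x)\coloneqq\Ehr_{\O(Q_n)}(x)$. Then $E(k,t)=\prod_{j=0}^{k}(1-e(j)t)^{-1}$, so $[t^i]\ln E(x,t)=S_i(x)/i$. Now $E_n(0)=1$, because only the origin lies in the $0$-dilate, so $E(0,t)=1/(1-t)$. As in Proposition~\ref{prop_coeff_2}, factor this out and define
\[a(t)\coloneqq[x]\ln\bigl((1-t)E(x,t)\bigr)=[x](1-t)E(x,t).\]
This is legitimate because $(1-t)E(x,t)$ has constant term $1$ in $x$. Since $\ln(1/(1-t))$ does not depend on $x$, we get $a(t)=[x]\ln E(x,t)$, hence $[t^i]a(t)=[x]S_i(x)/i$ for $i\ge1$ and $[t^0]a(t)=[x]E_0=0$. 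Then $[x]E_n=\sum_{i=0}^n[t^i]a(t)$, which is the claimed sum.

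The only genuinely delicate step is the interchange of $[x]$ with $\ln$ and $[t^n]$ in this two-variable setting. Concretely, one must check that $[x]\ln f=[x]f$ holds coefficientwise in $t$ when $f(0,t)=1$. This follows from writing $f=1+xg+O(x^2)$ and expanding the logarithm. Everything else is a verbatim adaptation of the earlier proofs.
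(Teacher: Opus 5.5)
Your proposal is correct and follows the paper's proof essentially verbatim. The paper likewise specializes via Corollary~\ref{cor_ehrcomb} to get $E(k,t)=\prod_{i=0}^{k-1}\bigl(1-\Ehr_{\O(P)}(i)\,t\bigr)^{-1}$, takes logarithms, and then either uses $E(0,t)=1$ or factors out $E(0,t)=1/(1-t)$, exactly as in Propositions~\ref{Prop_coeff} and~\ref{prop_coeff_2}; you additionally spell out details the paper leaves implicit, such as $[t^0]a(t)=0$ and the coefficientwise justification of $[x]\ln f=[x]f$ in $\QQ[x][[t]]$.
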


\begin{proof}
To simplify the notation, we set $q(x)=\Ehr_{\O(P)}(x)$. Thanks to Corollary \ref{cor_ehrcomb}, the proof is identical to those of Propositions~\ref{Prop_coeff} and~\ref{prop_coeff_2}, simply by replacing the specialization\footnote{
Note that for the comb poset $P_n$ we had $q(i)=
i+1$ and $S_n(x) = F_n(x+1)$.
}
$i+1$ with $q(i)$: \[E(k,t) \coloneqq \sum_{j \ge 0} \Ehr_{\O(Q_j)}(k-1)t^j = \sum_{j \ge 0} h_j(q(0),\ldots,q(k-1))t^j = \prod_{i=0}^{k-1} \frac{1}{1-q(i)t}.\]
\end{proof}

\begin{example}
    Let $Q_n$ be the poset defined in Example~\ref{es}, for which it holds $S_i(k) = \sum_{j=0}^{k} (j+1)^{2i} = F_{2i}(k+1)$ for every non-negative integer $k$. Then, we have
\[[x]\Ehr_{\O(Q_n)}(x-1)=\frac{1}{n}[x]F_{2n}(x)=\frac{B_{2n}}{n},\]
\[[x] \Ehr_{\O(Q_n)}(x) = \sum_{i=1}^n \frac{[x]F_{2i}(x+1)}{i}=2n+\sum_{i=1}^n \frac{B_{2i}}{i}.\]
\end{example}

The study of various posets through this approach is of particular interest because they are related to specific classes of permutations (their linear extensions), and via these propositions we can obtain identities involving their descent enumerators (the $h^*$-polynomial of their order polytopes, as noted in Remark~\ref{rem_des}).
This is exactly what we did in the previous section with the comb poset $C_n$ to prove Theorem~\ref{thm:1.1}.

As an example, let us consider the bicomb over two chains (i.e., totally ordered sets; see Figure~\ref{poset_1}), whose linear extensions are permutations ordered by the $j$-th occurrence.

\begin{definition}
    A permutation of the multiset $\{1^r, 2^r, \ldots, n^r\}$ is said to be \textbf{ordered by the $j$-th occurrence} if, in its one-line notation, the $j$-th occurrence of $i$ appears before the $j$-th occurrence of $i+1$, for all $1 \le i < n$.
    We denote the set of all such permutations by $\mathcal{S}_{n,j}^{(r)}$.
\end{definition}

\noindent For example, $ 13\underline{1}2\underline{23}312 \in \mathcal{S}_{3,2}^{(3)}$ is ordered by the second occurrence. 

In particular, the linear extensions of the bicomb $\comb_n(\Ch_{j-1}, \Ch_{r-j})$ over two chains of $j-1$ and $r-j$ elements are in a descent-preserving bijection with $\mathcal{S}^{(r)}_{n,j}$.

\begin{remark}
   The comb poset $C_n$ discussed in the previous section is the comb over the singleton poset $\{x\}$, which is simply a chain of one element $\Ch_1 = \{x\}$. Thus, $C_n = \comb_n(\Ch_1)$.
   This connection is clarified by Park \cite[Proposition 2.18]{Park}, who provides an explicit descent-preserving bijection between Stirling permutations $\mathcal{S}^{(2)}_{n}$ and the set of permutations ordered by the second occurrence $\mathcal{S}^{(2)}_{n,2}$; this gives an alternative proof of Proposition~\ref{prop_h} (by Remark~\ref{rem_des}).
\end{remark}

\noindent Moreover, if we consider the comb $\comb_n(\Ch_{k-1})$ over a single chain of $k-1$ elements (see Figure~\ref{poset_2}), its linear extensions correspond to $k$-uniform Restricted Growth Functions of length $kn$.

\begin{definition}[Milne~\cite{RGF}]
A \textbf{Restricted Growth Function} (RGF) of length $n$ is a sequence of positive integers $w = (w_1, w_2, \ldots, w_n)$ such that:
\begin{enumerate}[($i$)]
    \item $w_1 = 1$;
    \item $w_{i+1} \le \max\{w_1, \ldots, w_i\} + 1$ for each $1 \le i < n$.
\end{enumerate}
 The set of RGFs of length $n$ is in bijection with the set of partitions of $\{1, \ldots, n\}$: given a partition $\Pi = \{B_1, \ldots, B_r\}$ with blocks ordered by their minimum elements, the corresponding RGF $w$ is defined by $w_i = j$ if $i \in B_j$.
\end{definition}

\begin{definition}
    An RGF $w$ is \textbf{\boldmath $k$-uniform} if each value appearing in the sequence $w$ appears exactly $k$ times. In terms of the corresponding partition $\Pi$, this is equivalent to requiring that every block $B_j$ has cardinality $k$.
\end{definition}

\noindent Specifically, for each $k$-uniform RGF $w$ of length $kn$, the map $w \mapsto w'$ defined by $w'_j = n + 1 - w_{kn+1-j}$ is a descent-preserving bijection onto $\mathcal{S}_{n,k}^{(k)}$. As an example, for $n=3$ and $k=2$, the RGF $w=121332$ (associated with the partition $\{\{1,3\},\{2,6\},\{4,5\}\}$) is mapped to $w'=21\underline{1}3\underline{23}$.

\begin{figure}[b] 
    \centering

    \begin{minipage}{\textwidth}
        \centering
        \begin{tikzpicture}[
            every node/.style={circle, fill=black, inner sep=1.5pt},
            scale=0.6
        ]
            \def\dx{1.6} \def\dy{1.1} \def\h{0.9}
            \foreach \i in {0, ..., 4} {
                \node (X\i) at (\i*\dx, \i*\dy) {};
                \node (T\i) at (\i*\dx, \i*\dy + \h) {};
                \node (Y\i) at (\i*\dx, \i*\dy - \h) {};
                \node (Z\i) at (\i*\dx, \i*\dy - 2*\h) {};
                
                \draw (Z\i) -- (Y\i) -- (X\i) -- (T\i);
                \ifnum \i > 0
                    \pgfmathtruncatemacro{\prev}{\i-1}
                    \draw (X\prev) -- (X\i);
                \fi
            }
        \end{tikzpicture}
        \caption{Bicomb over two chains: $\{x < y\}$ and $\{z\}$.}
        \label{poset_1}
    \end{minipage}

    \vspace{1cm}

    \begin{minipage}{0.48\textwidth}
        \centering
        \begin{tikzpicture}[
            every node/.style={circle, fill=black, inner sep=1.3pt},
            scale=0.5
        ]
            \def\dx{1.6} \def\dy{1.1} \def\h{0.9}
            \foreach \i in {0, ..., 4} {
                \node (X\i) at (\i*\dx, \i*\dy) {};
                \node (Y\i) at (\i*\dx, \i*\dy - \h) {};
                \node (Z\i) at (\i*\dx, \i*\dy - 2*\h) {};
                
                \draw (Z\i) -- (Y\i) -- (X\i);
                \ifnum \i > 0
                    \pgfmathtruncatemacro{\prev}{\i-1}
                    \draw (X\prev) -- (X\i);
                \fi
            }
        \end{tikzpicture}
        \caption{Comb over the chain $\{x < y\}$.} 
        \label{poset_2}
    \end{minipage}
    \hfill
    \begin{minipage}{0.48\textwidth}
        \centering
        \begin{tikzpicture}[
            every node/.style={circle, fill=black, inner sep=1.3pt},
            scale=0.5
        ]
            \def\dx{2.0} \def\dy{1.1} \def\h{0.9} \def\off{0.5}
            \foreach \i in {0, ..., 4} {
                \node (X\i) at (\i*\dx, \i*\dy) {};
                \node (Y\i) at (\i*\dx - \off, \i*\dy - \h) {};
                \node (Z\i) at (\i*\dx + \off, \i*\dy - \h) {};
                
                \draw (Y\i) -- (X\i);
                \draw (Z\i) -- (X\i);
                \ifnum \i > 0
                    \pgfmathtruncatemacro{\prev}{\i-1}
                    \draw (X\prev) -- (X\i);
                \fi
            }
        \end{tikzpicture}
        \caption{Comb over the antichain $\{x, y\}$.}
        \label{poset_3}
    \end{minipage}
\end{figure}

\begin{example}   
To compute formulas for the descent enumerator of $k$-uniform RGFs of length $kn$, we need to study the related poset $\comb_n(\Ch_{k-1})$, where $\Ch_{k-1}$ denotes a chain of $k-1$ elements.
In particular, we compute the linear coefficient of its associated Ehrhart polynomial and apply Lemma~\ref{Lem_F}.

Since $\Ehr_{\O(\Ch_{k-1})}(x) = \binom{x+k-1}{k-1}$, for every non-negative integer $t$, we have\[
S_n(t) = \sum_{j=0}^t \binom{j+k-1}{k-1}^n,\]
where $S_n$ is the polynomial defined in Proposition~\ref{prop_coeff_gen}.
By expanding the summand and expressing $S_n(t)$ as a linear combination of Faulhaber polynomials, we can compute the linear coefficient $[x]S_n(x-1)$, from which the desired linear coefficient also follows.
\end{example}

\section{Open problems}

\subsection{Lattice pyramids}\label{sec_pyr}
Given a lattice polytope $\P \subset \R^n$, the
\textbf{pyramid over} $\P$, denoted by $\Pyr(\P)$, is the polytope defined by
\[\Pyr(\P) = \operatorname{conv}\left( (\P \times \{0\}) \cup \{v\} \right) \subset \R^{n+1},\]
where $v = (0, \ldots, 0, 1)$ \cite[Section 2.4]{ccd}. It is well known that the $h^*$\nobreakdash-polynomial is invariant under this construction, whereas the Ehrhart polynomial satisfies the following summation property for each non-negative integer $k$:
\[\Ehr_{\Pyr(\P)}(k) = \sum_{j=0}^k \Ehr_\P(j).\]

In the specific case of an order polytope $\O(P)$, the pyramid $\Pyr(\O(P))$ is equivalent to the order polytope $\O(\hat{P})$, where $\hat{P}$ is the poset obtained by adjoining a maximum element to $P$.

In \cite{Ferroni2026}, the study of the pyramid over $\P$ is employed to derive new combinatorial identities.
Following this line of research, a natural question is to study the Ehrhart polynomial of the pyramid over the comb polytope $P_n$ (independently of the $h^*$-polynomial, which we have already determined) and to compute
\[[x]\Ehr_{\Pyr(P_n)}(x-1),\]
yielding new identities involving the second Eulerian numbers.

\subsection{Higher-order Stirling permutations}

In his paper, Park \cite{Park} introduced higher-order Stirling permutations. For a given order $r \ge 1$, we denote the set of all such permutations by $\mathcal{S}^{(r)}_n$. These are defined as permutations of the multiset $\{1^r, 2^r, \ldots, n^r\}$ such that, for each $i \in \{1, \ldots, n\}$, all elements that appear between every pair of occurrences of $i$ are at least $i$.

Moreover, Park studied the Stirling poset $P^{(2)}_k$ (which we call the comb poset $C_{k}$) and its generalization $P^{(r)}_k$ (see Figure~\ref{fig_2}). He showed that the $h^*$-polynomial of the order polytope $\O(P^{(r)}_k)$ is the descent enumerator of Stirling permutations of order $r$.
A natural direction for future research would be the study of $[x]\Ehr_{\O(P^{(r)}_k)}(x-1)$ for $r \ge 3$, with the aim of finding identities involving higher-order Eulerian numbers.

\begin{figure}[hb]
    \centering
\[\begin{tikzcd}[column sep=small, row sep=small]
	&&&&&&&&&& |[inner sep=0pt]| \bullet && \phantom{\bullet} \\
	&&&&&&&& |[inner sep=0pt]| \bullet  && \phantom{\bullet}  \\
	&&&&&& |[inner sep=0pt]|\bullet && \phantom{\bullet} \\
	&&&& |[inner sep=0pt]|\bullet  && |[inner sep=0pt]|\bullet && \phantom{\bullet}   \\
	&& |[inner sep=0pt]|\bullet && |[inner sep=0pt]|\bullet && \phantom{\bullet}  \\
	|[inner sep=0pt]|\bullet  && \phantom{\bullet} \\
    |[inner sep=0pt]|\bullet && \phantom{\bullet} 
	\arrow[phantom, "- \dots -" rotate=30, from=2-9, to=1-11]
	\arrow[no head, from=3-7, to=2-9]
	\arrow[no head, from=3-7, to=4-7]
	\arrow[phantom, "- \dots -" rotate=30, from=5-3, to=4-5]
	\arrow[no head, from=6-1, to=5-3]
	\arrow[no head, from=6-1, to=7-1]
    \arrow[no head, from=4-5, to=5-5]
    \arrow[phantom, "{\boldsymbol{\cdot\cdot\cdot}}", from=4-5, to=3-7]
    \arrow[phantom, "{1 \ \ \ }", from=7-1, to=7-3]
    \arrow[phantom, "{2 \ \ \ }", from=6-1, to=6-3]
    \arrow[phantom, "{3 \ \ \ }", from=5-3, to=5-5]
    \arrow[phantom, "{r \ \ \ \ }", from=4-5, to=4-7]
    \arrow[phantom, "{\ \ r+1}", from=5-5, to=5-7]
    \arrow[phantom, "{ \ \ \ \ \ \ \ \ \ kr-k+1}", from=4-7, to=4-9]
    \arrow[phantom, "{ \ \ \ \ \ \ \ \ \ kr-k+2}", from=3-7, to=3-9]
    \arrow[phantom, "{\ \ \ \ \ \ \ \ \  kr-k+3}", from=2-9, to=2-11]
    \arrow[phantom, "{kr \ \ \ }", from=1-11, to=1-13]
\end{tikzcd}\]
\caption{The Stirling poset $P^{(r)}_k$}
 \label{fig_2}
\end{figure}

\begin{remark}
    In \cite{Ferroni2026}, the author studies the case of the hypercube, which is the order polytope associated with the antichain; the latter can be seen as the base case $P^{(1)}_k$.
\end{remark}

\bibliographystyle{amsalpha}
\bibliography{bibliography}

\end{document}